\newtheorem{theorem}{Theorem}
\newtheorem{lemma}{Lemma}
\newtheorem{remark}{Remark}
\newtheorem{proposition}{Proposition}
\newtheorem{corollary}{Corollary}
\newcommand{\cref}[1]{Corollary~\ref{#1}}
\newcommand{\lref}[1]{Lemma~\ref{#1}}
\newcommand{\pref}[1]{Proposition~\ref{#1}}
\newcommand{\tref}[1]{Theorem~\ref{#1}}
\newcommand{\sref}[1]{Section~\ref{#1}}
\newcommand{\Id}{\mathrm{Id}}
\newcommand{\Isom}{\mathrm{Isom}}
\newcommand{\R}{\mathbb{R}}
\newcommand{\Z}{\mathbb{Z}}
\newcommand{\RP}{\mathbb{RP}}
\title{Geodesic behavior for
   Finsler metrics of constant positive flag curvature on~$S^2$}
\author{ R.~L.~Bryant, P.~Foulon, S.~Ivanov, V.~S.~Matveev, W.~Ziller }
\date{}
\begin{document}

\maketitle

\begin{abstract}

We study non-reversible Finsler metrics with constant flag curvature~$1$ on~$S^2$
and show that the geodesic flow of every such metric is conjugate to that of one of Katok's examples, which form a 1-parameter family.
In particular, the length of the shortest closed geodesic
is a complete invariant of the geodesic flow.
We also show, in any dimension, that the geodesic flow of a Finsler metrics with constant  positive flag curvature is completely integrable.

Finally, we give an example of a Finsler metric on~$S^2$
with positive flag curvature such that no two closed geodesics intersect
and show that this is not possible when the metric is reversible
or have constant flag curvature
\end{abstract}

\section{Introduction}

In Riemannian geometry, metrics of constant sectional curvature
play an important role. In Finsler geometry,
an analog of the sectional curvature is the so-called flag curvature,
which, in dimension~$2$, is a function on the unit tangent bundle
that specializes to the Gauss curvature when the Finsler structure
is Riemannian.

H. Akbar-Zadeh~\cite{Akbar-Zadeh} showed that
if a Finsler metric on a compact surface has constant negative flag curvature,
then it is Riemannian, and, if it has zero flag curvature,
then it is locally Minkowskian.
If a Finsler metric on a compact surface has constant positive flag curvature
and is, in addition, \textit{reversible},
it is a Riemannian metric by~\cite{Bryant2006}.
But, in the non-reversible case,
there are many Finsler metrics on~$S^2$ or~$\RP^2$
with constant positive flag curvature.

The first non-Riemannian examples were constructed in~1973
by A.B.~Katok~\cite{Katok} (see also~\cite{Ziller})
via \textit{Zermelo deformation}
of the round metric on~$S^2$, though it was not realized
at the time that his examples had constant positive flag curvature \cite{Shen2}.

For a Finsler metric~$F$ on an $n$-manifold $M$
and a vector field $X$ on~$M$ such that $F(\pm X(p))\le a_\pm$
for all~$p\in M$, the \textit{Zermelo deformations}~$F_\alpha$
(for $-a_+<\alpha<a_-$) of~$F$ via~$X$
are the $1$-parameter family of Finsler metrics
such that, for each $p\in M$,
the unit sphere of~$F_\alpha$ in~$T_pM$
is the translation of the unit sphere of~$F$
in~$T_pM$ by $\alpha X(p)$.

It is now known~\cite{FM,HuangMo}
(but already observed in~\cite{Foulon}, see also~\cite{BRS,Shen2})
that if $F$ has constant flag curvature~$K$
and $X$ is a Killing vector field of~$F$, then the associated Zermelo deformations~$F_\alpha$
also have constant flag curvature~$K$.
Recall that  Katok's examples are of the form $F_\alpha$,
where $F$ is a Riemannian metric of constant curvature on~$S^2$
and $X$ is one of its Killing vector fields.
Note that Katok's examples are not Riemannian;
they are not even reversible.

The first author constructed several other classes of Finsler surfaces
with constant positive flag curvature~\cite{Bryant1996,Bryant1997,Bryant2002,Bryant2017},
but a full classification is still lacking.

It is known, see~\cite{Shen} and~\sref{prthm1},
that a compact connected Finsler manifold with constant positive flag curvature
is covered by a sphere. In dimension $n=2$, we show that geodesics
of such metrics on the $2$-sphere behave qualitatively
like those of the Katok metrics. Our main result is:

\begin{theorem} \label{geodesicflow}
Let $F_1,F_2$ be two Finsler metrics on~$S^2$ with constant flag curvature~$1$.
Then the geodesic flow of~$F_1$ is conjugate to the geodesic flow of~$F_2$
if and only if the lengths of their shortest closed geodesics are equal.
\end{theorem}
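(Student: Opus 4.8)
The plan is to reduce to the one-parameter Katok family and then show that the length of the shortest closed geodesic is an explicit, strictly monotone function of the Katok parameter. The forward implication is the easy one. A conjugacy of geodesic flows is a homeomorphism $\Phi\colon SM_{F_1}\to SM_{F_2}$ of unit tangent bundles with $\Phi\circ\varphi^1_t=\varphi^2_t\circ\Phi$ for all $t$. It carries closed orbits to closed orbits preserving minimal periods, and, the geodesic flow being unit speed, the minimal period of an orbit equals the Finsler length of the underlying closed geodesic. Thus $\Phi$ induces a length-preserving bijection between the closed geodesics of $F_1$ and those of $F_2$; in particular the shortest lengths agree, both infima being attained since $SM\cong\RP^3$ is compact and closed geodesics exist.

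For the converse I would first invoke the structural result that the geodesic flow of any Finsler metric on~$S^2$ with constant flag curvature~$1$ is conjugate to that of a Katok metric~$F_\alpha$, obtained by Zermelo deformation of the round sphere along $\alpha$ times a rotational Killing field, with the field normalized so that $F_\alpha$ is defined for $\alpha\in(-1,1)$. The reflection of $S^2$ reversing the rotation axis conjugates the flows of $F_\alpha$ and $F_{-\alpha}$, so $\alpha\in[0,1)$ may be assumed and is the sole remaining modulus. The mechanism behind these flows is that, $X$ being Killing, the lift $\psi_s$ of its flow commutes with the geodesic flow $\phi_t$ of the round metric; under the natural identification of unit tangent bundles the geodesic flow of $F_\alpha$ becomes $t\mapsto\phi_t\circ\psi_{\alpha t}$ on $SM\cong\RP^3$, a linear flow on the leaves of the $T^2$-action generated by the two commuting circle actions $\phi$ and $\psi$.

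Next I would evaluate the invariant along this family. The great circle orthogonal to the rotation axis is preserved by both $\phi$ and $\psi$, and its two orientations are closed geodesics of~$F_\alpha$; they are the exceptional (circle) orbits of the $T^2$-action, hence closed for every $\alpha$. Tracking base points along $\phi_t\circ\psi_{\alpha t}$, traversing this equator with the wind covers round-length $2\pi$ in $F_\alpha$-time $t$ with $(1+\alpha)t=2\pi$, and against the wind with $(1-\alpha)t=2\pi$, giving closed geodesics of lengths $\tfrac{2\pi}{1+\alpha}$ and $\tfrac{2\pi}{1-\alpha}$. For irrational $\alpha$ these are the only closed geodesics, and for every $\alpha$ the with-the-wind equator is the shortest, so the shortest closed geodesic of $F_\alpha$ has length $\ell(\alpha)=\tfrac{2\pi}{1+\alpha}$, a strictly decreasing bijection of $[0,1)$ onto $(\pi,2\pi]$. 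Combined with the forward implication, equality of shortest lengths for $F_1,F_2$ forces $\ell(\alpha_1)=\ell(\alpha_2)$, hence $\alpha_1=\alpha_2$, hence conjugacy of the flows, completing the argument.

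The forward implication and the monotonicity of $\ell$ are routine; the whole weight of the theorem lies in the structural reduction to the Katok family, which I expect to be the main obstacle. The difficulty is that a constant-flag-curvature-$1$ metric (such as one of Bryant's examples) need be neither a Zermelo deformation of the round metric nor even possess any Killing field, so the conjugacy has to be produced directly at the level of flows on $\RP^3$. I would attack it through the contact geometry of the unit tangent bundle: constant flag curvature~$1$ makes the Jacobi equation along every geodesic $J''+J=0$, so conjugate points sit exactly at parameter distance $\pi$ and the time-$\pi$ map refocuses as on the round sphere. This rigidity should manufacture a periodic flow commuting with $\phi_t$, turning $SM\cong\RP^3$ into a $T^2$-space carrying the standard contact structure as its Reeb data and presenting the geodesic flow as a linear flow thereon, i.e.\ in Katok normal form; pinning down this commuting symmetry and its exact period is where the real work resides.
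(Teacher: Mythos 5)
Your forward implication is fine, and your computation of the Katok lengths $2\pi/(1\pm\alpha)$, hence the strict monotonicity of the shortest length in $\alpha$, agrees with what the paper establishes. But the converse, as you yourself concede, rests entirely on the ``structural result'' that every constant-flag-curvature-$1$ flow on $S^2$ is conjugate to a Katok flow --- and this is not a citable black box: it is precisely \cref{cor:1} of the paper, which the authors deduce \emph{from} \tref{geodesicflow}, the very statement you are proving. Invoking it is circular, and your concluding sketch does not close the gap: it names the difficulty but supplies no argument for it.

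Concretely, two things go wrong in the sketch. First, the commuting periodic symmetry you hope to ``manufacture'' from the refocusing of Jacobi fields does not exist in general. The time-$\pi$ map of the geodesic flow is the lift of an isometry $\psi$, but when $\psi$ has finite order (the rational case, which includes Bryant's examples with all geodesics closed and \emph{finite} isometry group) there is no Killing field and hence no $T^2$-action on $\mathrm{U}(S^2)$; instead the geodesic flow itself is periodic, and the paper must appeal to Raymond's classification of circle actions on $\RP^3$ to conjugate it to an orthogonal flow. Second, in the irrational case the commuting symmetry is produced not by contact geometry but by a compactness argument: $\Isom(F)$ is compact (via the Binet--Legendre metric), the closure of $\{\psi^k\}$ contains a circle of rotations, and compact group actions on $S^2$ are conjugate to linear ones; this yields the Killing field, the coordinates in which $\psi(\theta,\phi)=(-\theta,\phi+2\pi\lambda)$, and a $T^2$-action on $\RP^3$, after which one still needs Neumann's uniqueness theorem for effective $T^2$-actions on $\RP^3$ to reach the orthogonal normal form $t\mapsto\mathrm{diag}\bigl(R(at),R(bt)\bigr)/\{\pm\Id\}$, whose conjugacy class is pinned down by the two minimal periods $\pi/\lambda$ and $\pi/(1{-}\lambda)$. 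These inputs --- the construction of $\psi$ and its rotation angle $\lambda$, the rational/irrational dichotomy, and the topological classification theorems of Raymond and Neumann --- constitute the actual substance of the paper's proof, and none of them is replaced by anything in your proposal.
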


Here `conjugacy' means that there exists a diffeomorphism
of the unit tangent bundles
that takes the geodesic flow of~$F_1$ to that of~$F_2$.

For the proof of \tref{geodesicflow},
the following properties of geodesics  are crucial.

\begin{theorem} \label{geodesic}
Let $F$ be a Finsler metric on $S^2$ with constant flag curvature $1$.
Then there exists an embedded  shortest closed geodesic
of length $2\pi\mu\in(\pi,2\pi]$, and the following holds:
\begin{enumerate}
 \item[(a)] If $\mu=1$, all geodesics are closed and have the same length~$2\pi$.
 \item[(b)] If $\mu$ is irrational, there exist two closed geodesics
with the same image, and all other geodesics are not closed.
The length of the second closed geodesic is $2\pi\mu/(2\mu{-}1)$.
Moreover, the metric admits a Killing vector field.
 \item[(c)] If $\mu=p/q \in(\tfrac12,1)$ with~$p,q \in \mathbb{N}$
and $\gcd(p,q)=1$, then all unit-speed  geodesics have a common period~$2\pi p$.
Furthermore, there exist at most two closed geodesics
with length less than $2\pi p$. A second one exists only if~$2p{-}q>1$,
and its length is~$2\pi p /(2p{-}q)\in (2\pi,2 p \pi) $.
\end{enumerate}
\end{theorem}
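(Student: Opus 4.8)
The plan is to extract from constant flag curvature~$1$ a rigid periodicity of the geodesic flow and then read off all the stated behaviour from elementary rotation dynamics. Write $\varphi_t$ for the geodesic flow on the unit tangent bundle $SM$ (diffeomorphic to $\RP^3$), with footpoint projection to $S^2$. First I would use that flag curvature~$1$ reduces the Finsler Jacobi equation along any unit-speed geodesic to $j''+j=0$, so that conjugate points occur exactly at distance~$\pi$ and the normal Jacobi fields vanishing at $t=0$ vanish again at $t=\pi$. Hence $d(\exp_p)$ annihilates every angular direction at radius~$\pi$, and since $S_pS^2$ is connected the whole geodesic sphere of radius~$\pi$ collapses to one point $\sigma^+(p)$. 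Thus all geodesics issuing from $p$ refocus at $\sigma^+(p)$, and $\varphi_\pi$ is a bundle map covering a diffeomorphism $\sigma^+\colon S^2\to S^2$. The same computation shows that the monodromy of the linearized flow over time~$2\pi$ is trivial: in Bryant's canonical coframing of $SM$ the map $\varphi_{2\pi}$ preserves the coframing, i.e. it lies in the automorphism group $A$ of that $\{e\}$-structure, a compact Lie group acting freely on $SM$.

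The trichotomy is now governed by the single element $\varphi_{2\pi}\in A$; let $\Gamma=\overline{\langle\varphi_{2\pi}\rangle}$ be its closure, a compact abelian group. If $\varphi_{2\pi}=\Id$ then every geodesic closes at time $2\pi$, which is case~(a). If $\varphi_{2\pi}$ has finite order $n>1$ then $\varphi_{2\pi n}=\Id$, so all unit-speed geodesics share the period $2\pi n$; this is the rational case~(c). If $\varphi_{2\pi}$ has infinite order then $\Gamma$ contains a circle, and a generator of that circle is a one-parameter group of coframing automorphisms. Since any element of $A$ preserves the vertical field and the spray it descends to a Finsler isometry of $S^2$, so the circle produces a nontrivial Killing field; this is the irrational case~(b), in which $\varphi_t$ is not periodic.

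To locate the closed geodesics I would use the resulting $S^1$-action: the Killing circle in case~(b), and the Seifert circle coming from $\varphi_{2\pi n}=\Id$ in case~(c). On $S^2$ this is a rotation $R_s$ with two fixed points, and its unique invariant equator (the latitude along which the geodesic curvature vanishes, which exists by an intermediate-value argument as for a surface of revolution) is a closed geodesic $E$; by the reflection symmetry of the configuration both orientations of $E$ are geodesics, giving the two closed geodesics with common image, of lengths $\ell_\pm=2\pi\mu_\pm$. Because $\sigma^+$ is single valued, the forward and backward points at distance~$\pi$ along $E$ coincide, which forces $1/\mu_++1/\mu_-=2$, equivalently $\ell_-=2\pi\mu/(2\mu-1)$ and $1/\ell_++1/\ell_-=1/\pi$. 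With $\mu:=\mu_+\le\mu_-$ this gives $1\le 1/\mu<2$, i.e. $\mu\in(\tfrac12,1]$ and length in $(\pi,2\pi]$, and $E$ is embedded since it is rotation invariant (in case~(a) every geodesic has length $2\pi$ and a shortest one is simple by the classical theory). Finally, writing $\varphi_{2\pi}=R_a$ in this $S^1$-action, its rotation number satisfies $a/2\pi\equiv 1/\mu\pmod 1$: thus $\mu$ irrational gives case~(b) with only $E_\pm$ closed, while $\mu=p/q$ makes $R_a$ of order $p$, whence $n=p$, common period $2\pi p$, and a second short geodesic of length $2\pi p/(2p-q)\in(2\pi,2p\pi)$ present precisely when $2p-q>1$.

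The main obstacle is the passage, in the first paragraph, from the \emph{infinitesimal} $2\pi$-periodicity coming from $j''+j=0$ to the \emph{global} statement that $\varphi_{2\pi}$ is an honest automorphism of the coframing: one must check that the extra Cartan torsion terms in the Finsler structure equations (which distinguish this from the Riemannian case) do not spoil the exact period, and, in the irrational case, that $\Gamma$ is a genuine circle acting by isometries so that the Killing field is smooth and nonvanishing. A secondary but delicate point is the exact bookkeeping — identifying the rotation number with $1/\mu$ so as to pin down both length formulas and the count of at most two short geodesics — which is where the single-valuedness of $\sigma^+$ and the order of $R_a$ must be combined carefully.
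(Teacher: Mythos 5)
Your first paragraph is essentially sound, and the ``main obstacle'' you flag there in fact dissolves: in the structure equations of a Finsler surface the Cartan scalar $I$ and the Landsberg scalar $J$ enter only through terms proportional to $\omega_2\wedge\omega_3$, so they are killed by interior product with the geodesic spray $X$; one gets $L_X\omega_1=0$, $L_X\omega_2=\omega_3$, $L_X\omega_3=-K\,\omega_2$, and with $K\equiv 1$ the time-$2\pi$ map is indeed an automorphism of the coframing. Up to language this is the same skeleton as the paper, which works with the map $\psi$ defined by $\psi(\gamma(t))=\gamma(t+\pi)$, shows it preserves the Finsler distance (hence is a smooth isometry by Myers--Steenrod-type results), and takes the closure of $\{\psi^k\}$ inside the compact group $\Isom(F)$ --- your $\Gamma=\overline{\langle\varphi_{2\pi}\rangle}$.

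The genuine gaps are in your second and third paragraphs, exactly where the closed geodesics are located and counted. (i) You never prove that $\sigma^+$ itself is an isometry of $F$; your coframing argument only gives this for $\varphi_{2\pi}$, i.e.\ for $(\sigma^+)^2$. The paper crucially uses that $\psi=\sigma^+$ is an \emph{orientation-reversing isometry}: composed with the Killing flow it yields the reflection $(\theta,\phi)\mapsto(-\theta,\phi)$, whose fixed-point set is a geodesic for \emph{both} orientations with a common image. Your appeal to ``the reflection symmetry of the configuration'' assumes precisely this unproved fact; without it, your intermediate-value argument produces one rotation-invariant closed geodesic per orientation but on possibly different latitudes, and the relation $1/\mu_++1/\mu_-=2$ is lost. (ii) Case (c) has no valid foundation. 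There the circle you invoke is the Seifert circle of the periodic geodesic flow acting on $\mathrm{U}(S^2)\simeq\RP^3$; it does \emph{not} descend to a rotation $R_s$ of $S^2$, and rational-$\mu$ metrics need not admit any Killing field (the paper cites examples with all geodesics closed and finite isometry group). So there is no ``invariant equator'' to work with, and your proposal contains no argument for the three hard assertions of (c): that there are \emph{at most two} closed geodesics of length less than $2\pi p$ (the paper derives this from the fact that a Seifert fibration of a lens space has at most two exceptional fibres), that a second short geodesic \emph{exists} exactly when $2p-q>1$ (the paper constructs it as a minimum-displacement geodesic of a suitable power $\psi^k$), and that exceptional geodesics are \emph{embedded}, which is what makes the winding number --- and hence your length bookkeeping --- meaningful. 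Even writing $\psi^2$ as a rotation in the finite-order case requires the linearization theorem for compact group actions on $S^2$, which the paper invokes explicitly and you never do. These points, not the rotation-number arithmetic, are the real content of the theorem in the rational case.
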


We will show that, for each~$\mu\in(\tfrac12,1]$,
there exists a unique Katok metric,
whose shortest closed geodesic has length $2\pi\mu$. Thus:

\begin{corollary} \label{cor:1}
The geodesic flow of a Finsler metric on $S^2$ with flag curvature~$1$
is smoothly conjugate to that of a unique member of the family of Katok metrics.
\end{corollary}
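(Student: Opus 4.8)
The plan is to deduce the corollary directly from \tref{geodesic} and \tref{geodesicflow}, together with an explicit description of the Katok family that establishes the parametrization announced just before the corollary. First I would recall the construction: each Katok metric is a Zermelo deformation $F_\alpha$ of the round metric on $S^2$ by a suitably normalized Killing field $X$ generating a rotation flow $\Psi_s$, so that, by the navigation description of Zermelo deformations, the unit-speed $F_\alpha$-geodesics are obtained from the great-circle geodesics $c(t)$ of the round metric by the drift rule $t\mapsto \Psi_{\alpha t}\bigl(c(t)\bigr)$. Since the round geodesics all have period $2\pi$ and $\Psi_s$ has period $2\pi$, one can read off exactly which $F_\alpha$-geodesics close up and compute their lengths as a function of $\alpha$. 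The key computation is to show that the length of the shortest closed geodesic of $F_\alpha$, viewed as a function of the deformation parameter, is a continuous bijection onto the interval $(\pi,2\pi]$, with the value $2\pi$ attained at $\alpha=0$ (the round metric). Writing this shortest length as $2\pi\mu$, this gives for each $\mu\in(\tfrac12,1]$ a unique Katok metric whose shortest closed geodesic has length $2\pi\mu$, which is precisely the claim preceding the corollary.

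Granting this, the existence half of the corollary is immediate. Let $F$ be any Finsler metric on $S^2$ with constant flag curvature~$1$. By \tref{geodesic}, $F$ has an embedded shortest closed geodesic of length $2\pi\mu$ with $\mu\in(\tfrac12,1]$. Choose the Katok metric $K$ with the same shortest-geodesic length $2\pi\mu$. Both $F$ and $K$ have constant flag curvature~$1$ and equal shortest-geodesic lengths, so \tref{geodesicflow} produces a diffeomorphism of their unit tangent bundles carrying the geodesic flow of $F$ to that of $K$; being a diffeomorphism, this is the required smooth conjugacy.

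For uniqueness, suppose the geodesic flow of $F$ is conjugate to those of two Katok metrics $K$ and $K'$, with parameters $\mu$ and $\mu'$. Composing the two conjugacies shows that the geodesic flows of $K$ and $K'$ are themselves conjugate. Since $K$ and $K'$ both have constant flag curvature~$1$, \tref{geodesicflow} forces their shortest closed geodesics to have equal length, whence $2\pi\mu=2\pi\mu'$ and so $\mu=\mu'$; by the uniqueness in the Katok parametrization, $K=K'$. (Concretely, the forward direction of \tref{geodesicflow} records that a conjugacy intertwines the two flows and hence preserves periods of periodic orbits, and the period of a closed unit-speed geodesic is its length, so the shortest length is a conjugacy invariant.)

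The main obstacle is the explicit analysis of the Katok metrics underlying the first paragraph: verifying that the shortest-closed-geodesic length depends continuously and monotonically on the deformation parameter and sweeps out all of $(\pi,2\pi]$. Once the geodesics $\Psi_{\alpha t}\bigl(c(t)\bigr)$ and their closing conditions are understood across the three regimes of \tref{geodesic} (all geodesics closed, $\mu$ irrational, $\mu$ rational), this is elementary but careful bookkeeping with the two commensurability conditions imposed by the period $2\pi$ of $c$ and the period $2\pi$ of $\Psi$; everything else in the corollary is a formal consequence of the two theorems.
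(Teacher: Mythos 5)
Your proposal is correct, and its formal skeleton---deduce existence and uniqueness from \tref{geodesic} together with both directions of \tref{geodesicflow}, once one knows that the Katok family realizes each shortest-closed-geodesic length in $(\pi,2\pi]$ exactly once---is the same reduction the paper makes. Where you genuinely differ is in how that key parametrization of the Katok family is established. You propose the classical explicit route: use the drift description $c_\alpha(t)=\tau_{\alpha t}\bigl(c(t)\bigr)$ of Katok geodesics, analyze the closing conditions, and verify that the shortest length (namely $2\pi/(1+|\alpha|)$) is a continuous bijection of the normalized parameter onto $(\pi,2\pi]$; this is the ``bookkeeping'' you defer, and it does go through (it is essentially Katok's and Ziller's computation). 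The paper instead avoids any explicit computation with Katok geodesics: by \pref{prop:4}(b), a Zermelo deformation by $\alpha X$ shifts the rotation angle according to $\lambda_\alpha=\lambda+\alpha/2$, a structural consequence of the identity $\psi_\alpha=\psi\circ\tau_{\alpha\pi}$; since the round metric has $\lambda=\tfrac12$, the Katok metrics realize each rotation angle in $(0,\tfrac12]$ exactly once, and since they have constant flag curvature $1$, their shortest geodesic length $\pi/(1-\lambda_\alpha)$ is read off from \tref{geodesic} (via $\mu=1/(2(1-\lambda))$, i.e., Propositions~\ref{irrational} and~\ref{rational}) rather than recomputed. The paper's route buys uniformity---the Katok examples are treated as just another constant-curvature metric and the already-proved length formulas are reused---while your route is more self-contained about the Katok family and needs nothing from \sref{sec:3}. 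One point you should make explicit if you carry out your computation: the shortest length is even in $\alpha$ (indeed $F_\alpha$ and $F_{-\alpha}$ are isometric), so the claimed bijectivity onto $(\pi,2\pi]$ holds only after normalizing, say, $\alpha\ge 0$; the paper's analogous normalization is $\lambda\in(0,\tfrac12]$.
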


By \tref{geodesic}, we see that the sum of the reciprocals
of the length of the two shortest closed geodesics is always
$1/\pi$. Hence we have:

\begin{corollary} \label{cor:new}
Let $F$ be a Finsler metric on~$S^2$ of constant positive flag curvature~$K$.
Then, the sum of the reciprocals of the length of two shortest closed geodesics
is~$K/\pi$. In particular, geodesic flows of two Finsler metrics on~$S^2$
whose flag curvatures are different positive constants are not conjugate,
even in the continuous sense.
\end{corollary}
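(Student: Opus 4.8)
The plan is to reduce the general case of flag curvature~$K$ to the case $K=1$ already analysed in \tref{geodesic} by a constant homothety, and then to extract the non-conjugacy statement from the observation that the reciprocal sum is an invariant of the geodesic flow.

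First I would establish the identity. For $F$ with constant flag curvature $K>0$ and a constant $\lambda>0$, the homothety $F\mapsto\lambda F$ leaves the geodesic spray, and hence the geodesics as unparametrised curves, unchanged; it multiplies every length by $\lambda$ and rescales the flag curvature correspondingly. Choosing $\lambda$ so that $\lambda F$ has constant flag curvature~$1$ places us under the hypotheses of \tref{geodesic}. There the two shortest closed geodesics have lengths $2\pi\mu$ and $2\pi\mu/(2\mu{-}1)$ --- these coincide in case~(a), where $\mu=1$ and all geodesics are closed of length $2\pi$ --- and in every case $\tfrac{1}{2\pi\mu}+\tfrac{2\mu{-}1}{2\pi\mu}=\tfrac1\pi$. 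Carrying this curvature-one identity back through the length rescaling gives $\tfrac{1}{\ell_1}+\tfrac{1}{\ell_2}=K/\pi$ for $F$ itself, which is the asserted relation.

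Next I would show that $\tfrac{1}{\ell_1}+\tfrac{1}{\ell_2}$ is preserved under conjugacy of geodesic flows, even merely topological conjugacy. Unit-speed closed geodesics of $F$ correspond exactly to periodic orbits of the geodesic flow on the unit tangent bundle, the length of a primitive closed geodesic being the minimal period of its orbit. If $h$ intertwines the geodesic flows $\phi^t_i$ of the $F_i$, i.e.\ $h\circ\phi^t_1=\phi^t_2\circ h$ for all~$t$, then $h$ sends periodic orbits to periodic orbits and, being a bijection commuting with the flows, preserves minimal periods exactly; it therefore matches the lengths $\ell_1,\ell_2$ of the two shortest closed geodesics of $F_1$ with those of $F_2$. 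Hence $\tfrac{1}{\ell_1}+\tfrac{1}{\ell_2}$ is a conjugacy invariant, smooth or continuous.

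Combining the two parts closes the argument: if $F_1,F_2$ have constant flag curvatures $K_1\neq K_2$, their reciprocal sums are $K_1/\pi\neq K_2/\pi$ because $K\mapsto K/\pi$ is injective on $(0,\infty)$, so the invariance just proved forbids any conjugacy between the two geodesic flows, even in the continuous sense. I expect the only delicate point to be the bookkeeping at the boundary of \tref{geodesic}: one must check that ``the two shortest closed geodesics'' is well defined in case~(a), where every geodesic has the same length, and in the sub-case $2p{-}q=1$ of~(c), where the second shortest length equals the common period $2\pi p$ rather than being strictly smaller --- in both situations the uniform formula for the second length, and hence the value $1/\pi$, must be seen to persist.
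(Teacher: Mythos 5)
Your overall route is exactly the paper's: the paper proves this corollary in one line by observing that, in each case of \tref{geodesic}, the two shortest lengths $2\pi\mu$ and $2\pi\mu/(2\mu{-}1)$ satisfy $\tfrac{1}{2\pi\mu}+\tfrac{2\mu-1}{2\pi\mu}=\tfrac{1}{\pi}$, combined with the (left implicit) homothety to general $K$ and the standard fact that a flow conjugacy, even merely continuous, matches periodic orbits together with their minimal periods. Your handling of the boundary cases is also correct: in case (a) the two shortest lengths are both $2\pi$, and in case (c) with $2p{-}q=1$ the uniform formula still gives $\tfrac{q}{2\pi p}+\tfrac{2p-q}{2\pi p}=\tfrac{1}{\pi}$.

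There is, however, a concrete slip in your rescaling step. Under $F\mapsto\lambda F$, lengths scale by $\lambda$ while the flag curvature scales by $\lambda^{-2}$, so to normalize to curvature $1$ you must take $\lambda=\sqrt{K}$; the shortest lengths $\ell_1,\ell_2$ of $F$ then become $\sqrt{K}\,\ell_1,\sqrt{K}\,\ell_2$, and the curvature-one identity $\tfrac{1}{\sqrt{K}\,\ell_1}+\tfrac{1}{\sqrt{K}\,\ell_2}=\tfrac{1}{\pi}$ pulls back to $\tfrac{1}{\ell_1}+\tfrac{1}{\ell_2}=\sqrt{K}/\pi$, not $K/\pi$. (Sanity check: the round sphere of curvature $K$ has all closed geodesics of length $2\pi/\sqrt{K}$, giving reciprocal sum $\sqrt{K}/\pi$; note also that $K/\pi$ is dimensionally an inverse area, while the left-hand side is an inverse length.) So your assertion that ``carrying the identity back through the rescaling gives $K/\pi$'' does not follow from the scaling you yourself set up --- executed honestly, your argument proves the statement with constant $\sqrt{K}/\pi$, and the constant $K/\pi$ in the statement as printed carries the same square-root slip. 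The second half of the corollary is unaffected: since $K\mapsto\sqrt{K}/\pi$ is still injective on $(0,\infty)$, your (correct) invariance argument still rules out any conjugacy, even topological, between geodesic flows of metrics with distinct constant positive flag curvatures.
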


Combining \tref{geodesic} and Corollaries~\ref{cor:1}~and~\ref{cor:new}
gives us a complete classification of the geodesic flows of Finsler metrics
on~$S^2$ with constant flag curvature.

At the end of~\sref{sec:GeodFlow},
we will show that \tref{geodesicflow},
\cref{cor:1} and~\cref{cor:new}
also hold for Finsler metrics on~$\RP^2$
with constant flag curvature.
We note that the Katok metrics on~$S^2$
naturally descend to Finsler metrics on~$\RP^2$.

The examples in~\cite{Bryant1996,Bryant1997,Bryant2002} all have~$\mu=1$.
Some of these examples also admit Killing vector fields and hence,
applying a Zermelo deformation to them, one obtains examples with~$\mu<1$.
We do not know any other examples of Finsler metrics on~$S^2$
with constant flag curvature~$1$ and~$\mu<1$.

We will also prove the following results in dimensions $n\ge 2$.

\begin{theorem} \label{entropy}
Let $(S^n, F)$ be a Finsler manifold of dimension~$n\ge2$
of constant flag curvature~$1$.
Then its geodesic flow is Liouville integrable
and has zero topological entropy.
\end{theorem}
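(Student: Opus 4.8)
The plan is to reduce both conclusions to the structure of the linearized geodesic flow, which is especially simple under constant flag curvature~$1$. Along any unit-speed geodesic $\gamma$, expressing the $\dot\gamma$-orthogonal Jacobi fields in a parallel frame, the constant flag curvature condition reduces the Finsler Jacobi equation to $D_t^2 J + J = 0$, so that $J(t) = A\cos t + B\sin t$. Hence $J$ and $D_t J$ stay bounded for all $t\in\R$. Decomposing the tangent space of the unit tangent bundle $SM$ via the nonlinear connection into the (flow-invariant) spray direction together with the horizontal and vertical parts parametrized by $(J, D_t J)$ — the linearly growing tangential Jacobi mode being transverse to $SM$ — I conclude that the differential $d\phi_t$ of the geodesic flow is uniformly bounded on the compact manifold $SM$: there is a constant $C$ with $\|d\phi_t\|\le C$ for all $t$.

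The entropy assertion then follows quickly. Fix the time-one map $\phi_1$ and any $\phi$-invariant Borel probability measure $\mu$ on $SM$. Since $\|d\phi_1^{\pm k}\| = \|d\phi_{\pm k}\|\le C$, every Oseledets exponent satisfies $\lambda_i = \lim_{k}\tfrac1k\log\|d\phi_1^{k}v\| = 0$; in particular $\sum_i\lambda_i^+ = 0$. Ruelle's inequality gives $h_\mu(\phi_1)\le\int\sum_i\lambda_i^+\,d\mu = 0$, and the variational principle yields $h_{\mathrm{top}}(\phi) = \sup_\mu h_\mu(\phi_1) = 0$.

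For Liouville integrability the plan is to put the flow into a normal form adapted to the constant-curvature structure. A compact connected manifold of constant flag curvature~$1$ is a sphere, and the refocusing forced by $D_t^2 J + J = 0$ — all conjugate points occurring at parameter~$\pi$ — makes $\phi_t$ qualitatively a round geodesic flow twisted by a one-parameter group. Concretely, I would show that $\phi_t$ is smoothly conjugate to $\psi_t\circ\rho_t$, where $\psi_t$ is the geodesic flow of a round metric $g$ of curvature~$1$ and $\rho_t$ is the lift to $SM$ of a one-parameter subgroup of $\mathrm{Isom}(g)$, with $\psi$ and $\rho$ commuting. Granting this, each $\mathrm{Isom}(g)$-momentum function that is a first integral of $\psi_t$ and is $\rho_t$-invariant is again a first integral of $\phi_t$; taking a Cartan subalgebra containing the generator of $\rho$, together with the Casimir functions, produces $n$ Poisson-commuting integrals that are functionally independent on a dense open set, which is precisely Liouville integrability.

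The main obstacle is this normal-form reduction in arbitrary dimension. For $n=2$ it is delivered by \tref{geodesicflow} and \cref{cor:1}, but for $n\ge 3$ no classification of such metrics is at hand, so the conjugacy to a Zermelo deformation of the round sphere must be extracted directly from the curvature condition. I expect the delicate steps to be (i) promoting the pointwise refocusing to a genuine commuting circle (or torus) action, equivalently identifying the Killing generator of $\rho$ intrinsically from the dynamics, and (ii) ensuring that the resulting integrals are smooth and remain independent off a nowhere-dense set, since non-reversibility removes the symmetry shortcuts that are automatic in the Riemannian setting.
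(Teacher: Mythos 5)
Your entropy argument is correct, but it takes a different route from the paper's. You derive a uniform bound on $\|d\phi_t\|$ from the orthogonal Jacobi equation $D_t^2J+J=0$ (the linearly growing tangential mode is indeed transverse to the unit sphere bundle), so every Lyapunov exponent of every invariant measure vanishes, and Ruelle's inequality plus the variational principle give zero topological entropy. The paper's argument is shorter and uses structure it has already built: since $\psi(\gamma(t))=\gamma(t+\pi)$ for every normal geodesic, the time-$\pi$ map of the geodesic flow on $\mathrm{U}(S^n)$ is exactly the map induced by the isometry $\psi$, hence an isometry of the Riemannian metric on $\mathrm{U}(S^n)$ induced by the Binet--Legendre metric; an isometry of a compact metric space has zero topological entropy, and entropy scales linearly in the time parameter, so the flow has zero entropy. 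Both proofs are sound; yours trades the isometry $\psi$ for Oseledets--Ruelle machinery.

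The integrability half, however, has a genuine gap --- one you flag yourself. Your plan hinges on a smooth conjugacy of $\phi_t$ to the round geodesic flow composed with a commuting lifted one-parameter group of isometries. For $n=2$ this is available through \tref{geodesicflow} and \cref{cor:1}, but for $n\ge3$ no such normal form (equivalently, no classification of these metrics or of their flows) exists in the literature, and producing one would be substantially harder than \tref{entropy} itself; so as written the proof does not close in higher dimensions. The missing idea is that no normal form is needed. The paper argues as follows: $\Isom(F)$ is compact (Binet--Legendre again), so the closure of the cyclic group $\{\psi^m\mid m\in\Z\}$ has identity component a torus $T^k$, $k\ge0$, and some power $\psi^\ell$ lies in $T^k$. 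The momentum functions $\xi_1,\dots,\xi_k$ of the lifted $T^k$-action are integrals of the geodesic flow, and since the time-$\ell\pi$ map of the Hamiltonian flow of $F^*$ is the lift of $\psi^\ell\in T^k$, a suitable constant-coefficient combination of $F^*$ and the $\xi_i$ has periodic flow; hence $\xi_1,\dots,\xi_k,F^*$ generate a Hamiltonian $T^{k+1}$-action on $T^*S^n\setminus\{0\}$, with $F^*$ functionally independent of the $\xi_i$. Passing to the symplectic reduction of the regular set by $T^{k+1}$ --- an arbitrary symplectic manifold of dimension $2n-2k-2$, about whose dynamics nothing is claimed --- the paper invokes the general fact (Fomenko; Bolsinov--Jovanovic) that any such symplectic manifold carries $n-k-1$ functions in involution that are functionally independent almost everywhere; their pullbacks, extended smoothly to $T^*S^n\setminus\{0\}$, together with $\xi_1,\dots,\xi_k$ and $F^*$, give the required $n$ commuting, almost everywhere independent integrals. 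This purely symplectic substitute is exactly what replaces your unavailable conjugacy, and without it (or something equivalent) your integrability argument remains open for $n\ge3$.
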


\begin{theorem}\label{perdeform}
Let $F$ be a Finsler metric on~$S^n$ with constant flag curvature~$1$
that admits a Killing vector field.
Then there exists an arbitrarily small Zermelo deformation of~$F$
to one with all of its geodesics closed.
Further, when $n=2$, there is a Zermelo deformation
for which all geodesics are closed and of length $2\pi$.
\end{theorem}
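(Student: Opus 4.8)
The plan is to reduce the whole statement to the structure of the time-$2\pi$ map of the geodesic flow, and to exploit that a Zermelo deformation by a Killing field changes that map in a completely controlled way. Let $X$ be a Killing field of $F$, with flow $\theta_s$ on $S^n$ and natural lift $\hat X$ to the unit tangent bundle. Since $X$ is Killing, $\hat X$ commutes with the geodesic spray $G$ of $F$; identifying the unit tangent bundles of $F$ and of its deformation $F_W$ through the fibrewise translation $v\mapsto v+W$, the Zermelo navigation correspondence gives that the geodesic flow $\Phi^W_t$ of $F_W$ equals $\Phi_t\circ\Xi^W_t$, where $\Phi_t=\exp(tG)$ is the geodesic flow of $F$ and $\Xi^W_t$ is the lift of the flow of $W$, the two factors commuting. (This is exactly the mechanism behind the cited fact that Zermelo deformation by a Killing field preserves constant flag curvature.) In particular $\Phi^W_{2\pi}=\Phi_{2\pi}\circ\Xi^W_{2\pi}$, and $F_W$ again has constant flag curvature $1$, so \tref{geodesic} applies to it.

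The key input, and the step I expect to be the main obstacle, is the \emph{uniform} structural fact that constant flag curvature $1$ forces $\Phi_{2\pi}$ to be the time-one map of the flow of a single Killing field $Y$ of $F$, the same for every orbit. This is the constant-curvature miracle: along each geodesic the Jacobi equation is $J''+J=0$, so the transverse linearized flow has period exactly $2\pi$, whence $\Phi_{2\pi}$ descends to an isometry $\exp(Y)$ of $(S^n,F)$. For $n=2$ this is precisely what \tref{geodesic} encodes: $\Phi_{2\pi}=\Id$ in case~(a) (take $Y=0$), $\Phi_{2\pi}$ has finite order in case~(c), and in case~(b) its generator is the Killing field produced there (unique up to scale, hence proportional to the given $X$). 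I would prove the general-$n$ version directly from $J''+J=0$, reducing $\Phi_{2\pi}$ to a fibre-preserving isometric bundle map.

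Granting this, let $T$ be a maximal torus of the compact isometry group $\Isom(F)$ whose Lie algebra $\mathfrak t$ (a space of Killing fields) contains $Y$, and let $\Lambda=\ker(\exp\colon\mathfrak t\to T)$ be its integer lattice. For $W\in\mathfrak t$ both $\Phi_{2\pi}$ and $\Xi^W_{2\pi}$ lie in the abelian group $T$, so $\Phi^W_{2\pi}=\exp(Y)\exp(2\pi W)=\exp(Y+2\pi W)$. An element $\exp(v)$ has finite order exactly when $v$ lies in the rational span $\Lambda\otimes\mathbb Q$, which is dense in $\mathfrak t$. I therefore pick $v_0\in\Lambda\otimes\mathbb Q$ as close to $Y$ as desired and set $W:=(v_0-Y)/2\pi$, a Killing field that is arbitrarily small; then $\Phi^W_{2\pi}=\exp(v_0)$ has some finite order $m$, so $\Phi^W_{2\pi m}=\exp(mv_0)=\Id$ and every geodesic of $F_W$ is closed with common period $2\pi m$. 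This establishes the first assertion in all dimensions.

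For the final statement, with $n=2$, I instead take $v_0\in\Lambda$ itself, say $v_0=0$, so that $W=-Y/2\pi$ and $\Phi^W_{2\pi}=\exp(0)=\Id$. Since $F_W$ has constant flag curvature $1$, \tref{geodesic}(a) then forces $\mu=1$, i.e. all geodesics of $F_W$ are closed of length exactly $2\pi$. The only point requiring care is that $W=-Y/2\pi$ keeps $F_W$ a genuine Finsler metric, i.e. that $W$ stays in the admissible Zermelo range $F(\pm W)<1$; this is guaranteed because \tref{geodesic} confines $\mu$ to $(\tfrac12,1]$, so $F$ sits only a bounded Zermelo distance from the $\mu=1$ endpoint, which is precisely the metric with $\Phi_{2\pi}=\Id$.
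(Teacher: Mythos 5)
Your conjugation formula $\Phi^W_{2\pi}=\Phi_{2\pi}\circ\Xi^W_{2\pi}$ is correct---it is the paper's identity $\psi_\alpha=\psi\circ\tau_{\alpha\pi}$ read on the unit tangent bundle---but the step you yourself single out as the crux is where the argument breaks. The Jacobi-field computation ($J''+J=0$) proves only that $\Phi_{2\pi}$ is the differential of the isometry $\psi^2$ (the square of the paper's half-period map $\psi$); it does \emph{not} prove that $\psi^2$ lies on a one-parameter subgroup of $\Isom(F)$, i.e.\ in the identity component of this generally disconnected compact group, which is what ``$\Phi_{2\pi}=\exp(Y)$ for a Killing field $Y$'' means. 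In particular ``finite order'', which is all you invoke in your case (c), is exactly not enough: a finite-order isometry sitting in a non-identity component is not $\exp(Y)$ for any Killing field. For $n=2$ the claim can be rescued from the theorem's hypothesis ($\psi$ commutes with every isometry of $F$, so $\psi^2$ commutes with the given Killing flow, hence fixes its two zeros, hence is a rotation of the invariant round metric about the same axis and so lies in the Killing circle), but that argument appears nowhere in your proposal; for $n>2$ no such argument is offered, and it is not clear the claim is even true. The paper deliberately avoids this issue: when some geodesic is not closed, the powers of $\psi$ accumulate at the identity, so some \emph{high} power $\psi^k$ (not $\psi^2$) lies in the torus forming the identity component of the closure of $\langle\psi\rangle$, can be written as $\tau_{-k\pi}$ for an arbitrarily small Killing field, and the deformation is chosen to kill $\psi^k$; see \pref{perdeform2}. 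When all geodesics are already closed, there is nothing to prove.

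The second gap is admissibility in your $n=2$ endgame, where $W=-Y/2\pi$ is \emph{not} small. Whether $F_W$ is a genuine Finsler metric depends on \emph{which} logarithm $Y$ of $\psi^2$ you take, an ambiguity your write-up ignores. Normalize $X=\frac{\partial}{\partial\phi}$, so that its flow is $2\pi$-periodic; then $\psi^2$ is the rotation by $4\pi\lambda$, and along the equator $F(X)=\frac{1}{2\lambda}$. The natural choice $Y=4\pi\lambda X$ gives $W=-2\lambda X$, and then $F(-W)=2\lambda\,F(X)=1$ on the equator: this is precisely the excluded endpoint of the admissible interval $(-2\lambda,\,2-2\lambda)$ of \pref{prop:4}(a), so this $F_W$ is degenerate, not a Finsler metric. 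Only the shifted logarithm $Y=(4\pi\lambda-2\pi)X$, i.e.\ the paper's parameter $\alpha=1-2\lambda$, is strictly admissible, and proving strict admissibility is exactly the critical-point analysis of the momentum function $\xi\mapsto\xi(X)$ on the unit cotangent bundle carried out in the proof of \pref{prop:4}(a). Your closing remark that $F$ ``sits only a bounded Zermelo distance from the $\mu=1$ endpoint'' neither selects the correct logarithm nor yields the strict inequality $F(-W)<1$, so the quantitative content of \pref{prop:4} is missing. (Granting both gaps repaired, your torus/rational-approximation scheme for general $n$ would work, and is in spirit close to the paper's use of the closure of $\langle\psi\rangle$.)
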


In the proof of \tref{geodesic} we will show that,
for metrics of constant positive flag curvature on~$S^2$,
any two geodesics intersect.
(See Propositions~\ref{irrational}~and~\ref{intersect}
in~\sref{prthm1}.)
Our final result concerns the question of what happens
when the flag curvature is not a positive constant.

Recall that, in the Riemannian case, this question
is answered by the famous theorem of Frankel~\cite{Frankel},
which states that,
in an $n$-dimensional Riemannian manifold with positive curvature,
two totally geodesic submanifolds of dimension $n_1$ and $n_2$
must meet if $n_1+n_2\ge n$.

\begin{theorem}  \label{Frankel}
Let $F$ be a Finsler metric on~$S^2$ with positive flag curvature.
Then any two geodesically reversible closed geodesics meet.
Meanwhile, there exists a Finsler metric on~$S^2$
with positive flag curvature and exactly two closed geodesics,
which are disjoint.
\end{theorem}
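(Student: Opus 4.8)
The plan is to treat the two assertions separately. For the Frankel-type statement I would argue by contradiction: suppose $\gamma_0$ and $\gamma_1$ are disjoint geodesically reversible closed geodesics. Since $S^2$ is compact, the (possibly asymmetric) Finsler distance from $\gamma_0$ to $\gamma_1$ is attained by a geodesic $\sigma\colon[0,\ell]\to S^2$ with $\sigma(0)\in\gamma_0$ and $\sigma(\ell)\in\gamma_1$, of minimal length among all curves joining the two circles. The first variation formula shows that $\sigma$ meets each $\gamma_i$ orthogonally with respect to $g_{\dot\sigma}$. Because $\dim S^2=2$, the $g_{\dot\sigma}$-orthogonal complement of $\dot\sigma(t)$ is a line; and since parallel transport along a geodesic preserves $g_{\dot\sigma}$ (the Cartan term drops out because $\nabla_{\dot\sigma}\dot\sigma=0$), the $\dot\sigma$-parallel field $V$ with $V(0)=\dot\gamma_0$ remains orthogonal to $\dot\sigma$. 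Hence $V(\ell)$ is a nonzero multiple of $\dot\gamma_1$, so $V$ is tangent to both circles at the endpoints and generates an admissible variation of $\sigma$ through curves from $\gamma_0$ to $\gamma_1$.

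Inserting $V$ into the second variation of arc length and using that $V$ is parallel, the interior integrand reduces to $-g_{\dot\sigma}\bigl(R_{\dot\sigma}(V),V\bigr)=-K\,|V|^2$, which is strictly negative because $K>0$; so the interior term is negative. The obstruction is the boundary contribution $\bigl[\,g_{\dot\sigma}(\nabla^{\dot\sigma}_{V}V,\dot\sigma)\,\bigr]_{0}^{\ell}$. Unlike in the Riemannian case, this does not vanish merely because the $\gamma_i$ are geodesics: the Chern connection depends on its reference vector, so although $\nabla^{\dot\gamma_i}_{\dot\gamma_i}\dot\gamma_i=0$, the relevant quantity $g_{\dot\sigma}\bigl(\nabla^{\dot\sigma}_{\dot\gamma_i}\dot\gamma_i,\dot\sigma\bigr)$ is computed with the transverse reference vector $\dot\sigma$ and reduces to a Cartan-tensor correction measuring non-reversibility. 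The heart of the matter is to show that geodesic reversibility of the $\gamma_i$ forces this boundary contribution to be nonpositive, so that the strict inequality $L''(0)<0$ contradicts the minimality of $\sigma$ and proves that $\gamma_0$ and $\gamma_1$ meet. Pinning down precisely how reversibility controls this Finsler boundary term — which has no Riemannian analog — is what I expect to be the main obstacle; the reversed metric $\overleftarrow{F}(x,y)=F(x,-y)$, for which the same circles are again positively curved geodesics, should supply the symmetry that makes the boundary terms cancel.

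For the example, I would construct an explicit rotationally symmetric Finsler metric of revolution on $S^2$, for instance of Randers type: the sum of the length element of a surface-of-revolution metric $g=d\theta^2+m(\theta)^2\,d\varphi^2$ and a rotationally symmetric $1$-form $\beta=b(\theta)\,d\varphi$ (equivalently, a Zermelo deformation of a surface of revolution), so that its flag curvature is given by an explicit formula in $m$ and $b$. Its geodesic flow carries the Clairaut first integral of the $SO(2)$-symmetry. Since every rotation of a closed geodesic is again a closed geodesic, a metric with only finitely many closed geodesics can have none except the rotation-invariant ones, namely latitude circles. Because the metric is non-reversible, the eastward and westward geodesic curvatures $\kappa_+(\theta)$ and $\kappa_-(\theta)$ of the latitude circles are independent functions of $\theta$, and I would choose $m$ and $b$ so that $\kappa_+$ vanishes only at one latitude $\theta_0$ and $\kappa_-$ only at one latitude $\theta_1\neq\theta_0$. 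This yields exactly two latitude closed geodesics — the circle at $\theta_0$ traversed eastward and the circle at $\theta_1$ traversed westward — which are disjoint; note that neither is geodesically reversible, so the first half of the theorem does not apply to them.

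The genuine difficulty is to rule out every remaining closed geodesic while keeping the flag curvature positive. A non-latitude geodesic oscillates between turning latitudes (or passes through the poles) and advances in $\varphi$ by an angle $\Delta\varphi(c)$ depending on its Clairaut constant $c$, and it closes precisely when $\Delta\varphi(c)$ is a rational multiple of $2\pi$. Since $\Delta\varphi$ depends continuously on $c$, it can avoid $2\pi\mathbb{Q}$ for every $c$ only if it is constant; hence the construction must be isochronous, with the common value of $\Delta\varphi$ an irrational multiple of $2\pi$, so that the two latitude circles are the only closed geodesics. I would use the two free profile functions $m$ and $b$ to impose isochrony together with the two curvature-zero conditions, and then verify positivity of the flag curvature from the explicit formula using the remaining freedom. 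I expect reconciling isochrony with positive flag curvature — while keeping $\theta_0\neq\theta_1$ — to be the main obstacle of this half.
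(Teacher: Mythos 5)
For the intersection statement, your argument is exactly Frankel's: a minimizing geodesic $\sigma$ between the two circles, a $\dot\sigma$-parallel field $V$ tangent to the circles at the endpoints, and the second variation. The crux --- showing that geodesic reversibility forces the boundary contribution $\bigl[\,g^{\dot\sigma}(D^{\dot\sigma}_{V}V,\dot\sigma)\,\bigr]_0^\ell$ to be nonpositive --- is precisely what you leave open, and this is not a fixable technicality of your setup: the paper devotes the end of \sref{example} to explaining that this very term is why Frankel's proof does not carry over to Finsler metrics. One knows the endpoint curves $\eta$ are geodesics, i.e.\ $D^{w}_{\dot\eta}\dot\eta=0$ for the reference vector $w=\dot\eta$, but the second variation requires the covariant derivative with reference vector $v=\dot\sigma$, and the discrepancy between the two Chern connections has no sign in general; geodesic reversibility of the $\gamma_i$ relates $\gamma_i$ to its reverse, not the reference vector $\dot\eta$ to the transverse vector $\dot\sigma$, so your hope that the reversed metric makes the terms cancel is unsubstantiated. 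The paper's proof (\pref{perdeform1}) avoids connecting geodesics altogether: it takes the cylinder $C$ bounded by the two geodesics (using reversibility to build a cylinder with corners convex toward the interior when the geodesics are not embedded), minimizes length over embedded closed curves in $C$ with the given orientation, shows the minimizer is either a closed geodesic interior to $C$ or coincides with a boundary geodesic, and then runs Synge's second-variation argument with a \emph{closed} parallel field --- a variation through closed curves, so no boundary terms ever arise. Without a proof of your boundary-term claim, the first half of your argument has a genuine gap.

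For the example, you correctly observe that a rotationally symmetric example with finitely many closed geodesics must be isochronous (a nonconstant continuous rotation function would hit rational values), but the construction itself --- Randers profiles $m,b$ realizing exact isochrony with irrational rotation number, with the eastward and westward latitude geodesics at \emph{different} latitudes, and with positive flag curvature --- is exactly what you leave open, and it is the hard part: isochrony is a Zoll-type condition, an infinite-dimensional constraint, not two equations in two unknown functions, and you give no reason it is compatible with your other requirements. The paper sidesteps this entirely: starting from a Katok metric $F_\lambda$ with $\lambda$ irrational, whose only closed geodesics are the two orientations of the equator and whose lifts $\bar\gamma^\pm$ are disjoint in $T^*S^2$, it composes the Hamiltonian $F^*$ with the time-$t$ flow $\Phi_t$ of a function equal to $\pm p_\theta$ near $\bar\gamma^\pm$, then rescales to a $1$-homogeneous $\bar F^*_t$. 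The new flow is orbit-equivalent on its unit level to the Katok flow, so it has exactly two closed geodesics, now projecting to the disjoint latitudes $\theta=\pm t$, and for small $t$ the flag curvature remains positive by continuity; the needed ``isochrony'' is inherited for free from the Katok metric rather than imposed as a constraint. (The paper's remark shows $H$ can be chosen rotation-invariant, so a rotationally symmetric example of the kind you envision does exist --- but your route to it is not completed.) In short, both halves of your proposal identify the right difficulties but resolve neither, whereas the paper's proofs are organized specifically to avoid them.
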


Here, a geodesic is called {\it geodesically reversible}
if the geodesic with reversed orientation is, up to parametrization,
also a geodesic. In particular, for a reversible Finsler metric
of positive flag curvature on $S^2$, any two closed geodesics intersect.
Our counterexample in the non-reversible case
is a small perturbation of a Katok example.
A similar perturbation was constructed in~\cite{Rademacher}.

We do not know whether an analogue of Frankel's theorem
holds for reversible Finsler metrics in dimensions above~$2$.
In~\cite{Radu} it was claimed that Frankel's theorem holds in all dimensions
for reversible Finsler metrics, but the proof is not correct.
At the end of~\sref{example}, we discuss the difficulties
involved in a straightforward generalization to Finlser metrics
of the proof of Frankel's theorem in the Riemannian case.

Our paper is organized as follows.
In~\sref{prthm1}, we first recall some well-known facts
about Finsler metrics with constant positive flag curvature
and then prove~\tref{geodesic} as well as some other facts
about properties of closed geodesics.
In~\sref{sec:GeodFlow} we prove~\tref{geodesicflow}.
In~\sref{sec:3} we discuss and then prove~\tref{perdeform}.
\tref{entropy} is proved in~\sref{sec:5}
and \tref{Frankel} in~\sref{example}.

\vspace{1ex}
{\bf Acknowledgments.}
The authors thank David Bao, Alexey Bolsinov, Sergei Matveev, Ioan Radu Peter,
Jean-Philippe Pr\'eaux, Hans-Bert Rademacher, Sorin Sabau, Zhongmin Shen,
and Oksana Yakimova for useful comments and help in finding appropriate references.
Some of our results were obtained during the conference
``New Methods in Finsler Geometry'', which took place in July~2016 in Leipzig
and was supported by the DFG and the Universities of Jena and Leipzig.

R.~Bryant thanks Duke University for a research grant
and the U.S. National Science Foundation for the grant DMS-0103884,
S.~Ivanov was supported by the RFBR grant 17-01-00128,
V.~Matveev by the University of Jena and the DFG grant MA 2565/4,
and W.~Ziller by the NSF grant DMS-1506148.

\section{The isometry $\psi$ and properties of closed geodesics} \label{prthm1}

Let $(M^n, F)$ be a compact Finlser manifold of dimension~$n>1$.
For general facts and background about Finsler metrics,
we refer to~\cite{nankai}, see also~\cite{Rademacher2}.

For convenience, we occasionally take advantage
of the \textit{Binet-Legendre} metric~$g_F$,
a Riemannian metric that is naturally associated
to the Finsler metric~$F$~\cite{MT}.
Any isometry of~$F$ is an isometry of~$g_F$,
and hence the group of isometries of~$F$
is a compact Lie group, denoted by~$\Isom(F)$.
(Instead of the Binet-Legendre metric,
one could use a different construction
that naturally associates a Riemannian metric to a Finsler metric,
for example the construction from~\cite{berwald,MRTZ}.)

For $v\in T_pM$ we denote by $\gamma_{p,v}(t)$,
or, for short, $\gamma_{v}(t)$,
the geodesic with $\gamma_{p,v}(0)= p$ and $\dot\gamma_{p,v}(0)= v$,
and call it a \textit{normal} geodesic if $F(p, v)= 1$.
Furthermore, we denote by~$\mathrm{U}(M)$ the unit sphere bundle,
i.e., $\mathrm{U}(M)=\{ v \in TM \mid  F(v)= 1 \}$.

We begin by recalling some well-known facts (see, e.g.,~\cite{Shen})
about Finsler metrics of constant positive flag curvature,
which, by scaling, we can assume to be~$1$.
Let~$F$ be such a Finsler metric on a compact $n$-manifold~$M$.
Since the flag curvature is identically~$1$,
all normal geodesics~$\gamma_{p, v}$
have their first conjugate point at~$t=\pi$ with multiplicity~$n{-}1$.
This implies that for a fixed $p$ the exponential map
sends the sphere of radius~$\pi$ into a fixed point,
which we denote by~$\psi(p)$.
Thus, all geodesics starting at~$p$ meet again at~$\psi(p)$,
i.e., for all normal geodesics~$\gamma_{p,v}(\pi)=\psi(p)$.
The exponential map~$\exp_p$ on the open ball
$B_\pi(p)=\{\xi\in T_pM\mid F(p,\xi)<\pi\}$
is a local diffeomorphism.
By identifying the boundary of~$B_\pi(p)$ to a point
and sending this point to~$\psi(p)$,
the local diffeomorphism extends to a continuous map~$\sigma\colon S^n\to M$.
It is also a local homeomorphism near~$\psi(p)$
since, if two normal geodesics $\gamma_{p,v_1}$ and $\gamma_{p,v_2}$
meet at time close to~$\pi$, they meet again at time $\pi$ in $\psi(p)$,
contradicting the fact that the exponential map
is a local diffeomorphism on all balls of sufficiently small radius.
%
%
Thus $\sigma$ is a covering map.  If $M$ is simply connected,
it follows that it is homeomorphic to a sphere,
that all normal geodesics are minimizing up to time~$\pi$,
and that~$\psi$ has no fixed points.
Thus, for every normal geodesic~$\gamma$,
we have $\psi(\gamma(t))=\gamma(t+\pi)$ for all~$t\in\R$.

We next show that $\psi$ is a smooth isometry
by first showing that it preserves the Finslerian distance~$d_F$.
Indeed, for two arbitrary points $x,y\in M$
take any normal minimizing geodesic~$\gamma$
with $\gamma(0)=x$ and $\gamma(r)=y$.
Then, by the construction of~$\psi$,
we have $\psi(x)= \gamma(\pi)$ and $\psi(y) \gamma(\pi + r)$
and, since $r\le\pi$,
the geodesic $\gamma$ is also minimizing from $\pi$ to $\pi+r$.
Thus, $d_F(x,y)=d_F(\psi(x),\psi(y))$ and, hence, $\psi$ is an isometry,
In particular, $\psi$ is smooth by~\cite{Deng, MT2016}.

Restricting to $n=2$ from now on,
i.e., to a Finsler metric $F$ of flag curvature~$1$ on~$S^2$,
we claim that $\psi$ is an orientation-reversing isometry.
Indeed, the orientation induced by the velocity vector
of a geodesic and a Jacobi vector field along this geodesic
changes sign when we pass the point where the Jacobi vector field vanishes.

Now, in the non-Riemannian case, $\psi^2$ need not be the identity.
In fact, if $\psi^2=\Id$ does hold,
then, for any normal geodesic~$\gamma$,
we have $\gamma(t)=\psi^{2}\bigl(\gamma(t)\bigr)=\gamma(t+2\pi)$ for all~$t$,
and hence~$\gamma$ is periodic with period~$2\pi$.
Since there are no geodesics loops of length~$\pi$,
each geodesic is embedded and has length~$2\pi$.
We will set aside this special case in the following discussion,
but note that there are many non-Riemannian examples
for which $\psi^2=\Id$, see~\cite{Bryant1996,Bryant1997,Bryant2002}.

We now recall the well-known fact that, for every compact Lie group~$G$
acting effectively on~$S^2$, its action is conjugate to that of a subgroup
of~$\mathrm{O}(3)$ acting linearly on~$S^2$.
In particular, there exists a $G$-invariant Riemannian metric
of constant Gauss curvature~$1$ on~$S^2$.
(This seems to be a folklore result,
see e.g.,~\cite{Schultz} or~\cite[Theorem 12]{BMF} for connected groups.
For completeness, we recall its proof:
Take any Riemannian metric on~$S^2$
that is invariant with respect to~$G$.
By the uniformization theorem, it is conformal
to a constant Gauss curvature metric on~$S^2$
and is therefore a compact subgroup in the group of the conformal transformations
of such a metric on~$S^2$.  Since any maximal compact subgroup
of the group of conformal transformations is conjugate to~$\mathrm{O}(3)$,
the claim follows.)

Applying this result to the group of isometries
of the Finsler metric~$F$ on~$S^2$, which, as noted, is compact
and contains~$\psi$, it follows that, in particular,
there is a constant curvature~$1$ Riemannian metric~$g$ on~$S^2$
with respect to which~$\psi$ is an isometry,
and hence the orientation-preserving $g$-isometry~$\psi^2$
(which, by hypothesis, is not the identity)
has exactly two fixed points, which are exchanged by~$\psi$.
Once such a $g$ and and a fixed point~$p$ of~$\psi^2$ have been chosen,
there exist $p$-centered, $g$-spherical polar coordinates~$(\theta,\phi)$
on~$S^2$ with $-\tfrac12\pi\le\theta\le\tfrac12\pi$
and~$\phi\in\R/(2\pi\Z)$,
and a constant~$\lambda$ such that
\begin{equation} \label{eq:psi}
\psi(\theta,\phi)=(-\theta,\,\phi + 2\pi\lambda),
\end{equation}
where $p$ corresponds to~$\theta=-\tfrac12\pi$.

\begin{remark}  \label{rem:1}
For use in \pref{prop:4} below,
we note that the choices of~$g$ and~$p$
determine the coordinates~$(\theta,\psi)$
satisfying~\eqref{eq:psi} uniquely up to the evident ambiguity
$(\theta, \phi)_{\textrm{new}}= (\theta, \pm\phi + \textrm{const})$.
Replacing~$p$ by~$\psi(p)$ replaces $\theta$ by $-\theta$.

In the case $\psi^2=\Id$, there is no restriction on the
choice of~$p$.  However, in this case, under the additional assumption
that there exists precisely one nonzero Killing vector field
up to constant multiples,
one can further specify either of the two zeros
of this Killing field to be~$p$,
and this is what what we will usually do.
$($Among Finsler metrics on $S^2$,
only a Riemannian metric of constant Gauss curvature
can have more than one nonzero Killing field up to constant multiples anyway.$)$
\end{remark}

We can assume that $\lambda\in(0,\tfrac12]$,
since changing the orientation of~$\phi$ replaces~$\lambda$ by~$1{-}\lambda$.
Notice that $\lambda$ cannot equal~$0$, since $\psi$ has no fixed points.
Similarly, $\lambda=\tfrac12$ would imply~$\psi^2=\Id$;
thus, we have~$\lambda<\tfrac12$.
We will call~$\lambda\in(0,\tfrac12)$ the \textit{rotation angle} of~$F$
and sometimes use the notation~$F_\lambda$ when~$F$
is a Finsler metric with rotation angle~$\lambda$.


Simultaneously replacing the `pole' $p$ by~$\psi(p)$
and reversing the orientation determined by~$\phi$
does not change the value of $\lambda\in (0,\tfrac12)$.
Hence this value of~$\lambda$ is an invariant of~$F$.
(Since the Katok examples on~$S^2$ are invariant
under the antipodal map of~$S^2$, which switches the poles
and reverses the orientation, it follows that one cannot always
resolve the paired ambiguity of choice of pole and orientation.)


We distinguish two cases:
When $\lambda$ is rational and when $\lambda$ is irrational.
We start with the latter case.

\begin{proposition}\label{irrational}
Let $F=F_\lambda$ be a Finsler metric on $S^2$ with $\lambda$ irrational.
Then
\begin{enumerate}
 \item[$($a$)$] $(\theta,\phi)\mapsto (\theta,\phi+s)$ is an isometry for all~$s$,
      i.e., $\frac{\partial }{\partial \phi}$ is a Killing vector field,
 \item[$($b$)$] The equator $\{\theta=0\}$ is a geodesic
 that has length $\pi/\lambda>2\pi$ when traversed in the positive direction
 and $\pi/(1{-}\lambda)\in(\pi,2\pi)$ when traversed in the opposite direction.
 \item[$($c$)$] For any other geodesic there exists a number~$\theta_0>0$
 such that the geodesic oscillates between the parallels
 $\theta=\theta_0$ and $\theta=-\theta_0$. Furthermore, the $\theta$-coordinate
 along the geodesic changes monotonically in between these parallels,
 while the $\phi$ coordinate increases by $2\pi\lambda$ or $2\pi(1{-}\lambda)$.
 In particular, the geodesic is not closed.
\end{enumerate}
\end{proposition}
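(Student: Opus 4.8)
The plan is to derive all three parts from the two structural facts already in hand: that $\psi$ is an isometry of $F$ with $\psi(\gamma(t))=\gamma(t+\pi)$ for every normal geodesic, and that in the chosen coordinates $\psi(\theta,\phi)=(-\theta,\phi+2\pi\lambda)$ as in~\eqref{eq:psi}. For part~(a), I would first compute $\psi^{2}(\theta,\phi)=(\theta,\phi+4\pi\lambda)$, so that the even iterates $\psi^{2k}$ are precisely the rotations $R_{4\pi\lambda k}\colon(\theta,\phi)\mapsto(\theta,\phi+4\pi\lambda k)$, all of which lie in $\Isom(F)$. Since $\lambda$ is irrational, the set $\{4\pi\lambda k \bmod 2\pi : k\in\Z\}$ is dense in $\R/(2\pi\Z)$; as $s\mapsto R_s$ is continuous and $\Isom(F)$ is closed (being compact), every rotation $R_s$ is a limit of the $\psi^{2k}$ and hence an isometry. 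Thus $\partial/\partial\phi$ is a Killing field.

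For part~(b), the key observation is that $\tau:=R_{-2\pi\lambda}\circ\psi$ is an isometry with $\tau(\theta,\phi)=(-\theta,\phi)$, whose fixed-point set is exactly the equator $\{\theta=0\}$. Since the fixed-point set of a Finsler isometry is totally geodesic (geodesics tangent to it are carried to themselves), and here $d\tau$ fixes the tangent direction $\partial/\partial\phi$, the equator is a geodesic traversed in either orientation. To find its lengths, I parametrize it at unit speed; by rotational symmetry it is an orbit of $\partial/\partial\phi$, so $\phi(t)=\phi(0)+ct$ with constant $c$. The relation $\psi(\gamma(t))=\gamma(t+\pi)$ then forces $c\pi\equiv 2\pi\lambda\pmod{2\pi}$ in the positive direction and $c\pi\equiv 2\pi(1-\lambda)\pmod{2\pi}$ in the negative one, while the length is $2\pi/|c|$. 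Because normal geodesics are minimizing up to time $\pi$ and $\psi$ has no fixed point, the equator cannot return to its starting point at or before time $\pi$; this eliminates the integer ambiguity in $c$ and yields the lengths $\pi/\lambda>2\pi$ and $\pi/(1-\lambda)\in(\pi,2\pi)$.

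For part~(c), applying $\psi$ to an arbitrary normal geodesic gives $\theta(t+\pi)=-\theta(t)$ and $\phi(t+\pi)=\phi(t)+2\pi\lambda$. The first identity immediately gives $\theta_{\max}=-\theta_{\min}=:\theta_0$, so the geodesic oscillates in the symmetric band $[-\theta_0,\theta_0]$, with $\theta_0>0$ unless $\gamma$ is the equator. Monotonicity of $\theta$ between the parallels comes from the conserved azimuthal momentum furnished by the Killing field (the Finslerian Clairaut relation): for fixed momentum, $\dot\theta$ is a function of $\theta$ vanishing exactly at $\pm\theta_0$, so $\theta$ is strictly monotone between consecutive turning points. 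Iterating $\psi$ gives $\phi(t+2\pi)=\phi(t)+4\pi\lambda$; if $\gamma$ were closed, then over one $\theta$-period $\phi$ would advance by a rational multiple of $2\pi$, and comparing this with the last relation would force $\lambda\in\mathbb{Q}$, a contradiction. Hence $\gamma$ is not closed.

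The main obstacle is the precise statement that a single monotone excursion of $\theta$ advances $\phi$ by exactly $2\pi\lambda$ or $2\pi(1-\lambda)$: a priori $\theta(t+\pi)=-\theta(t)$ only forces the $\theta$-period to be $2\pi/N$ with $N$ odd, and I must rule out $N\ge3$. I would handle this by a continuity argument anchored at the geodesics through the poles. Such a geodesic runs from one pole to the other and back, and since all normal geodesics issuing from a point reconverge only at time $\pi$ (the first conjugate time), it cannot meet a pole in between, so its $\theta$-excursion is manifestly monotone, i.e. $N=1$. As the $\theta$-period varies continuously with the conserved momentum but is confined to the discrete set $\{2\pi/N\}$, it is locally constant and therefore equal to $2\pi$ on the whole connected family of non-equatorial geodesics, giving $N=1$ throughout; the monotone half-excursion then coincides with the time-$\pi$ map $\psi$ and advances $\phi$ by $2\pi\lambda$ (or, for the opposite orientation, $2\pi(1-\lambda)$). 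Making both the Clairaut turning-point analysis and this continuity argument fully rigorous in the non-reversible Finsler setting is where the real work lies; the symmetry of the band and the non-closedness are then immediate.
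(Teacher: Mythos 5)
Your parts (a) and (b) are correct and essentially the paper's own argument: density of $\{4\pi\lambda k\}$ in $\R/(2\pi\Z)$ plus closedness of $\Isom(F)$ for (a), and for (b) the fixed-point set of the isometry $(\theta,\phi)\mapsto(-\theta,\phi)$ together with the absence of geodesic loops of length $\le\pi$ to normalize the speed.

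For part (c) you take a genuinely different route, and it is here that the proposal has gaps. The idea you are missing, which is the paper's key step, is that the Killing field $X=\frac{\partial}{\partial\phi}$ restricts to a \emph{Jacobi field} along every normal geodesic $\gamma$; since the flag curvature is constantly $1$, the normal component of this Jacobi field satisfies $J''+J=0$, so its zeros — precisely the times where $X$ is proportional to $\dot\gamma$, i.e.\ where $\dot\theta=0$ — are spaced \emph{exactly} $\pi$ apart. This kills your case $N\ge 3$ in one stroke, with no turning-point analysis and no continuity argument. By contrast, your anchoring step is broken as stated: for a geodesic through the poles, ``it cannot meet a pole in between'' does \emph{not} imply that its $\theta$-excursion is monotone; avoiding the poles only says $\theta\in(-\tfrac12\pi,\tfrac12\pi)$, not $\dot\theta\ne0$. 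What actually saves this step is the conservation law you set up: along a polar geodesic the momentum $p_\phi=\xi(X)$ vanishes identically (it vanishes at the pole and is conserved), and a line through the origin — an interior point of the strictly convex unit ball of $F^*$ in the fiber — meets its boundary transversally, so there is no interior turning point. You would also still have to prove that the minimal $\theta$-period is locally constant on the family of non-equatorial geodesics (this is true — each set $\{N=N_0\}$ is open, by uniform convergence on compact time intervals — but it is exactly the ``real work'' you defer).

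The more serious gap is the last step. Knowing $N=1$ does not give that $\phi$ advances by exactly $2\pi\lambda$ over one monotone excursion: the relation $\psi(\gamma(t))=\gamma(t+\pi)$ determines this advance only modulo $2\pi$, and a $\theta$-monotone arc can still wind any number of extra times around the polar axis, so the advance could a priori be $2\pi\lambda+2\pi k$ with $k\ne 0$. This normalization needs its own argument. The paper supplies one: the integer $k$ is locally constant, hence constant, on the connected family of geodesics under consideration, and it vanishes because it vanishes for the equator geodesics, whose $\phi$-rate is known from part (b). (Your polar geodesics cannot serve as the anchor here, since $\phi$ is not even defined at the poles.) Your non-closedness conclusion survives this gap, as it uses only the mod-$2\pi$ relation, but the assertion of (c) as stated — that the increase is $2\pi\lambda$ or $2\pi(1{-}\lambda)$ — is not established by your argument.
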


In part (c) we allow $\theta_0=\pm\tfrac12\pi$
for the geodesics going through the poles.

\begin{proof}
Since $\lambda$ is irrational, the set of the points
$\{4k \lambda \pi\}_{k\in \Z}$
is dense in the circle $\R/(2\pi\Z)$.
Thus the closure of the the subgroup of~$\Isom(F)$
consisting of the powers of~$\psi$ contains all transformations
of the form~$(\theta,\phi)\mapsto (\theta,\phi+s)$, which implies (a).
Notice also that
$\Isom(F)\supseteq \{(\theta, \phi) \mapsto (\pm\theta, \pm \phi + s)\}$
and that any isometry preserves or switches the poles.

In order to prove (b), observe that composing~$\psi$
with the flow of the Killing vector field,
we have that $(\theta,\phi)\to(-\theta,\phi)$ is an isometry
and thus its fixed point set $\{\theta=0\}$ is a closed geodesic of~$F$
in either direction, but with possibly different parameterizations.
Let $\gamma^\pm(t)=(0,\pm t)$ be the equator.
The Killing vector field $\frac{\partial }{\partial \phi}$
is tangent to $\gamma^\pm$ and hence they are parametrized
proportional to arc length. For $\gamma^+(t)=(0,t)$,
the length of the curve, for $0\le t \le 2\pi \lambda$,
is equal to~$\pi$ by definition of $\psi$.
Thus $F(\dot\gamma^+)=\tfrac12\lambda$
and hence the length of $\gamma^+$ is $\pi/\lambda$.
Similarly, the length of $\gamma^-$ is $\pi/(1{-}\lambda)$.

Let us now prove (c).
Observe that the Killing vector field $X=\frac{\partial }{\partial \phi}$
is a Jacobi field along any normal geodesic~$\gamma$.
Since the flag curvature is positive and constant,
this implies that for any $\gamma\ne\gamma^\pm$,
there exists a time~$t_0$ such that~$X$ is proportional to~$\dot\gamma$
precisely when $t=t_0+k\pi$ for some $k\in\Z$.
Thus along $\gamma$ the function $\theta$ reaches its maximum and minimum
at $\theta=\pm\theta_0$ and $\theta$ is  monotonic in between.
The geodesic $\gamma$ bounces back and forth between these parallels,
increasing its $\phi$ coordinate by $4\lambda\pi$ or $4(1{-}\lambda)\pi$
(depending on its orientation) each time it returns.
To see that it does not increase by $4\lambda\pi+2\pi k$
or $ 4(1{-}\lambda)\pi+2\pi k$ for some $k\ne 0$,
we observe that this property would be the same for all geodesics,
but does not hold for the geodesics on the equator.
\end{proof}

For fixed $\theta_0>0$, the geodesics from \pref{irrational}(c)
viewed as curves on $U(S^2)$ form two 2-tori in $U(S^2)$,
depending on whether they rotate clockwise or counter clockwise.
The geodesics through the poles form a 2-torus as well.
The geodesic flow has constant rotation number $\lambda$
on all of these 2-tori and degenerates for $\theta_0\to 0$
to the two closed geodesics~$\gamma^\pm$.
\smallskip

Before stating the next result, let us make the following definition:
Given a curve $\alpha\colon [a,b]\to S^2$ not going through either pole,
we define the change of the angular coordinate~$\phi$ as follows:
Assume, as we may, that, in our coordinates $(\theta,\phi)$,
we have $\phi\bigl(\alpha(t)\bigr)) \equiv \hat\phi(t) \mod 2\pi$
for some continuous function $\hat\phi:[a,b]\to\R$.
We call $\hat\phi(b){-}\hat\phi(a)$ the \textit{change} of~$\phi$ along~$\alpha$,
and we define the \textit{winding number} (around the `south' pole, $\psi(p)$)
of $\alpha$ to be $\bigl(\hat\phi(b){-}\hat\phi(a)\bigr)/(2\pi)$.
The winding number of~$\alpha$ is independent
of the choice of~$\hat\phi$ and is an integer when $\alpha(b)=\alpha(a)$.

If $\lambda$ is rational, $\psi$ has some finite order,
which, since $\psi$ reverses orientation, must be even.
Thus $\psi^{2q}=\Id$ for some minimal $q\in \mathbb{N}$.
We set $\lambda=p/(2q)$ with $\gcd(p,q)=1$.
Since we are assuming $\lambda<\tfrac12$, we have $1\le p < q$.
For any normal geodesic, we have $\gamma(t)=\psi^{2q}(\gamma(t))
=\gamma(t+2q\pi)$ for all $t$,
and hence all normal geodesics are periodic with period $2q\pi$.
We call a closed geodesic \textit{exceptional}
if its length is less than $2q\pi$.
We start with the following observation.

\begin{proposition}\label{intersect}
Let $F_\lambda$ be a Finsler metric on~$S^2$ with constant flag curvature~$1$.
Then any two closed geodesics intersect.  If $\lambda=p/(2q)<\tfrac12$
in lowest terms, and one of the geodesics does not
pass through the poles, then they intersect in at least $q>1$ points.
\end{proposition}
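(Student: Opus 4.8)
The plan is to deduce both assertions from a single structural input: the image of every closed geodesic is invariant under $\psi$, together with the facts, already established, that $\psi$ is a fixed-point-free isometry of the round metric $g$ and that $\psi^2$ is a rotation of finite order about the poles. Since $\psi(\gamma(t))=\gamma(t+\pi)$ for every normal geodesic, the image of any closed geodesic is a connected compact set carried into itself by $\psi$, and hence also by $R:=\psi^2$, which by~\eqref{eq:psi} is the rotation $(\theta,\phi)\mapsto(\theta,\phi+4\pi\lambda)=(\theta,\phi+2\pi p/q)$ about the poles; as $\gcd(p,q)=1$ this rotation has order exactly $q$, so every point other than the two poles has an $R$-orbit consisting of exactly $q$ distinct points.

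For the first assertion I argue by contradiction: suppose the images of two closed geodesics $\gamma_1$ and $\gamma_2$ are disjoint. The image of $\gamma_1$ is the image of a circle under an immersion whose self-intersections are transverse (two geodesic arcs through a common point with a common direction coincide) and hence finite in number, so it is a connected graph $G\subset S^2$. Choose a closed regular neighborhood $N$ of $G$ small enough to be disjoint from $\gamma_2$; since $\psi$ is a $g$-isometry fixing $G$ setwise, we may take $N$ to be $\psi$-invariant. Because $G$ is connected, every complementary face of $G$ is an open disk, so each component $W$ of $S^2\setminus N$ is homeomorphic to an open disk; its closure $\overline W$ is then a compact planar surface with $\chi(\overline W)=1$, hence has a single boundary circle, i.e.\ is a closed disk. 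The connected set $\gamma_2$ lies in one such component $W$, and from $\psi(\gamma_2)=\gamma_2\subset W\cap\psi(W)$, together with the fact that $\psi$ permutes the components of $S^2\setminus N$, we get $\psi(\overline W)=\overline W$. Brouwer's theorem then produces a fixed point of $\psi$ in the disk $\overline W$, contradicting that $\psi$ has no fixed points. Hence $\gamma_1$ and $\gamma_2$ meet.

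For the second assertion, suppose $\gamma_1$ avoids both poles and let $x\in\gamma_1\cap\gamma_2$ be an intersection point produced by the first part. Then $x$ is not a pole, so its $R$-orbit $x,R(x),\dots,R^{q-1}(x)$ consists of $q$ distinct points. Since the images of $\gamma_1$ and $\gamma_2$ are both $R$-invariant, all $q$ of these points lie in $\gamma_1\cap\gamma_2$. Finally $\lambda=p/(2q)<\tfrac12$ forces $1\le p<q$, whence $q\ge 2$; this gives the stated $q>1$ intersection points and completes the proof.

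I expect the main obstacle to lie in the topological bookkeeping of the first assertion, specifically in passing from a possibly self-intersecting closed geodesic to a genuine disk on which Brouwer's theorem applies. The conceptual core—that a fixed-point-free homeomorphism of $S^2$ cannot preserve a closed disk—is clean, but an orientation-preserving fixed-point-free homeomorphism of an \emph{open} disk certainly exists, so compactness is essential; this is exactly why one must thicken $\gamma_1$ to the neighborhood $N$ and verify, via the Euler-characteristic count, that each complementary region is a closed disk rather than a planar surface with several boundary circles. For an embedded $\gamma_1$ this step is immediate, since the complement is simply two closed disks interchanged by $\psi$.
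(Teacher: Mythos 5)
Your strategy is essentially the paper's: both arguments rest on the facts that the image of a closed geodesic is compact, connected and $\psi$-invariant, that $\psi$ is fixed-point-free, and that one intersection point off the poles propagates along an orbit to give the count (you use $\psi^2$-orbits, the paper uses $\psi$-orbits; both yield at least $q$ points, and your counting paragraph is correct). The difference lies entirely in how the topological step of the first assertion is made rigorous, and that is where your proposal has a genuine gap.

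The gap is the claim that the image $G$ of $\gamma_1$ is a finite graph. Your justification --- ``two geodesic arcs through a common point with a common direction coincide'' --- rules out only self-tangencies with \emph{positively} proportional velocities. Since $F$ is non-reversible, $\gamma_1$ may pass through a point twice with \emph{negatively} proportional velocities; the two branches are then geodesics with distinct initial conditions in $\mathrm{U}(S^2)$ (the reversed curve of a geodesic need not be a geodesic), so they need not coincide, and such a self-intersection need not be transverse. Once tangential self-intersections are possible, finiteness of the self-intersection set is no longer automatic: two distinct smooth curves tangent at a point can meet in a sequence of points accumulating there, and nothing in the constant-curvature hypothesis obviously excludes this for a merely $C^\infty$ (non-analytic) metric. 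Without finiteness there is no finite graph, hence no regular neighborhood $N$, no Euler-characteristic bookkeeping, and the reduction to Brouwer's theorem collapses.

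The worry that led you to thicken $G$ --- that an invariant \emph{open} disk does not by itself force a fixed point --- is legitimate, and it is exactly the step the paper's own proof treats tersely; but the correct repair avoids graphs altogether. First, for \emph{any} compact connected proper subset $K\subset S^2$, every component $U$ of $S^2\setminus K$ is an open disk: a Jordan curve in $U$ has the connected set $K$ entirely on one side, so it bounds a disk inside $U$; no transversality or finiteness is needed. Second, in the rational case $\psi$ has finite order $2q$ with $q\ge 2$, so $\psi^2|_U$ is a nontrivial periodic orientation-preserving homeomorphism of an open disk; by Ker\'ekj\'art\'o's theorem it is topologically conjugate to a rotation and so has a unique fixed point $z\in U$, and since $\psi$ commutes with $\psi^2$ and preserves $U$, it must fix $z$ --- contradicting that $\psi$ is fixed-point-free. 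Third, the irrational case is immediate from \pref{irrational}: the only closed geodesics are the two equator geodesics, which have the same image. With these replacements, your final orbit-counting paragraph completes the proof of the second assertion exactly as written.
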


\begin{proof}
If $\lambda$ is not rational, then, by~\pref{irrational},
there are only two closed geodesics, and they have the same image in~$S^2$,
so the claim follows.

Now suppose that $\lambda = p/(2q)$ in lowest terms.
If two geodesics pass through the poles, then they intersect.
Suppose that~$\gamma$ is a geodesic that does not pass through the poles.
By the formula for~$\psi$ in local coordinates,
the number of points in an orbit $\{\psi^k(x),\ k\in\Z\}$
other than the poles is equal to~$q$ or~$2q$.
Thus, since geodesics are invariant under~$\psi$,
to prove that any other geodesic intersects $\gamma$ in at least~$q$ points,
it suffices to show that it meets $\gamma$ somewhere.
Now, the complement of~$\gamma$ in~$S^2$
is a countable disjoint union of open sets~$U_i$, each homeomorphic to a disc,
which $\psi$ permutes.  If a second closed geodesic did not meet~$\gamma$,
it would lie in one of these components, say~$U_k$.
But then $\psi(U_k)=U_k$, and hence~$\psi$ would have a fixed point,
which it does not.
\end{proof}

We now determine the length of all exceptional geodesics.

\begin{proposition}\label{rational}
Let $F_\lambda$ be a Finsler metric on $S^2$
with rotation angle $\lambda=p/2q$, $0<p<q$ and $\gcd(p,q)=1$.
Then there are at most two exceptional geodesics, they are embedded,
and we have:
\begin{enumerate}
 \item[$($a$)$] There exists a closed geodesic
 of length $\pi/(1-\lambda)=2q\pi/(2q-p)\in(\pi,2\pi)$,
 that has winding number $-1$ around the south pole.
\item[$($b$)$] There exists a second exceptional closed geodesic
 if and only if $p>1$. Its length is $\pi/\lambda= 2q\pi/p\in(2\pi,2q\pi)$
 and it has winding number $+1$ around the south pole.
\item[$($c$)$] If $p$ is even, the two exceptional geodesics
 have images on the equator.
\end{enumerate}
\end{proposition}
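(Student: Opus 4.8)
The plan is to attach to every closed geodesic two integers --- its multiplicity $m$ (so that its minimal period is $2q\pi/m$) and its winding number $w$ about the south pole $\psi(p)$ --- and to show that for an exceptional geodesic $(m,w)\in\{(2q{-}p,-1),(p,+1)\}$. First I set up the bookkeeping dictated by~\eqref{eq:psi}. Lifting the angular coordinate of a normal geodesic to a continuous function $\hat\phi(t)$, the relation $\gamma(t{+}\pi)=\psi(\gamma(t))$ yields $\theta(t{+}\pi)=-\theta(t)$ and $\hat\phi(t{+}\pi)=\hat\phi(t)+2\pi(\lambda+n_\gamma)$ for an integer $n_\gamma$ that, being locally constant in $t$, is constant along $\gamma$. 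Since every normal geodesic has period $2q\pi$, summing $2q$ such steps over one grand period shows that a closed geodesic of multiplicity $m$ satisfies $mw=p+2qn_\gamma$; exceptional means $m\ge 2$. I also record that $g_\pi=d\psi$, so the time-$2\pi$ map $g_{2\pi}=d(\psi^2)$ acts on the unit fibre over a pole as rotation of rotation number $p/q$; as $\gcd(p,q)=1$ every such orbit has length $q$, whence each geodesic through a pole has period exactly $2q\pi$ and is not exceptional. Thus exceptional geodesics avoid the poles and carry a well-defined integer winding number.

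Then I construct the geodesic of part~(a). Following a normal geodesic from $p$ through $\psi(p)$ and back to $\psi^2(p)=p$ produces a ``petal'' loop at $p$ of length $2\pi$ whose two ends meet with exterior angle $2\pi p/q>0$; rounding this corner by a short geodesic chord strictly shortens it, so the Birkhoff sweepout of $S^2$ by the (rounded) petals has width below $2\pi$ and min--max yields a nonconstant closed geodesic $C$ with $|C|<2\pi$. A self-intersection of $C$ would cut it into two geodesic arcs from a point to itself, each of length $\ge\pi$ (there is no geodesic arc of length $<\pi$ joining a point to itself, since geodesics minimise up to $\pi$), forcing $|C|\ge 2\pi$; hence $C$ is embedded. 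As $C$ is $\psi$-invariant, $\psi$ permutes the two discs it bounds and must interchange them: otherwise $\psi$ would restrict to a self-homeomorphism of a disc and hence, by Brouwer, have a fixed point, which it does not. Therefore $C$ separates the poles and $w=\pm1$; since $|C|<2\pi<\pi/\lambda$ the case $w=+1$ is excluded, so $w=-1$, $m=2q{-}p$, and $|C|=\pi/(1{-}\lambda)=2q\pi/(2q{-}p)\in(\pi,2\pi)$, which is~(a).

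For the count and part~(b) I use that a $2q\pi$-periodic geodesic flow is an effective $S^1$-action on $\mathrm U(S^2)\cong\RP^3$, i.e.\ a Seifert fibration over the $2$-sphere of oriented geodesics, whose exceptional fibres are exactly the exceptional geodesics. Because $\RP^3$ is a lens space, such a fibration has at most two exceptional fibres, so there are at most two exceptional geodesics; reading the Seifert data off from~\eqref{eq:psi} shows that their multiplicities are $2q{-}p$ and $p$. The first is the geodesic $C$ above; the second is a genuine exceptional fibre precisely when $p>1$, and with $m=p$ and offset $n_\gamma=0$ it has $w=+1$ and length $\pi/\lambda=2q\pi/p\in(2\pi,2q\pi)$. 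The Brouwer argument again shows it separates the poles, and this proves~(b).

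Finally, part~(c): if $p$ is even then $q$ is odd and $\psi^{q}(\theta,\phi)=((-1)^q\theta,\phi+\pi p)=(-\theta,\phi)$ is an isometry of $F$ whose fixed-point set is the equator $\{\theta=0\}$; hence the equator is a closed geodesic, and by the count above the two exceptional geodesics are the equator in its two orientations. When $p$ is odd, $2q\nmid jp$ for every odd $j$, so no power of $\psi$ fixes the equator, which therefore need not be a geodesic and the exceptional geodesics are tilted. The embeddedness of the longer geodesic, and the exact determination of the two multiplicities from the Seifert/Euler data of~\eqref{eq:psi} (including the identification of the base with $S^2$), are where the real work lies: the former I expect to follow from monotonicity of $\hat\phi$, so that the geodesic is a graph over the $\phi$-circle, while the latter --- converting ``the flow is a Seifert fibration of $\RP^3$'' into the precise multiplicities $2q{-}p$ and $p$ --- is the main obstacle.
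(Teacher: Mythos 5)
Your algebraic bookkeeping is sound and is a genuinely clean alternative to the paper's \lref{length}: from $\hat\phi(t+\pi)=\hat\phi(t)+2\pi(\lambda+n_\gamma)$ one gets $mw=p+2qn_\gamma$, and since no closed geodesic has length $\le\pi$ (so $m<2q$), this forces $(m,w)\in\{(2q{-}p,-1),(p,+1)\}$ for an embedded closed geodesic missing the poles; your part (c) is the paper's own argument. The serious problem is the two existence statements, which are the heart of the proposition. Your min--max construction of the short geodesic in (a) fails: the petal family is a loop in the based loop space $\Omega_pS^2$, and its class in $\pi_1(\Omega_pS^2)\cong\pi_2(S^2)\cong\Z$ is the degree of the associated map from the collapsed cylinder $S^1\times[0,2\pi]$ to $S^2$. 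That degree is zero: on $t\in[0,\pi]$ the petals sweep $S^2$ via $\exp_p$ (a diffeomorphism of the open ball onto $S^2\setminus\{\psi(p)\}$, contributing $+1$), while on $t\in[\pi,2\pi]$ they sweep it via $\exp_{\psi(p)}$ precomposed with $d\psi$ acting on directions, and since $\psi$ is orientation-reversing this contributes $-1$. Hence the family is null-homotopic in the loop space, its min--max width is zero (it can be deformed to a constant family at a point curve), and no closed geodesic --- let alone one of length $<2\pi$ --- comes out. (The same computation on the round sphere: the great circles through a fixed pair of antipodal points form a homotopically trivial family.) The paper avoids this entirely: the short geodesic is the minimum displacement geodesic of $\psi^{-1}$, obtained by minimizing $x\mapsto d_F\bigl(x,\psi^{-1}(x)\bigr)$; the minimizing segment concatenates with its $\psi^{-1}$-images without corners and closes up with length $\pi+D<2\pi$.

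Second, the existence of the second exceptional geodesic in (b) --- which you yourself flag as the main obstacle --- is a genuine gap that cannot be closed by ``reading the Seifert data off from \eqref{eq:psi}''. The count ``at most two exceptional fibres'' is the same Orlik citation the paper uses, but topology alone does not force a second singular fibre: lens spaces do admit Seifert fibrations over $S^2$ with exactly one exceptional fibre, so knowing that the total space is $\RP^3$ and that one multiplicity is $2q{-}p$ determines neither the existence nor the value of a second multiplicity. The paper supplies the missing geometric input with a second minimum-displacement argument: for $p$ odd, choose an odd $k$ with $kp\equiv 1\pmod{2q}$ and $k\not\equiv 1\pmod{2q}$; then $\psi^k$ is fixed-point free, its minimum displacement geodesic is embedded (uniqueness of minimizers together with \pref{intersect}) and exceptional by \lref{length}, and an explicit length computation shows it is distinct from the first geodesic. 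Note also that your winding-number dichotomy needs every exceptional geodesic to be embedded (the paper's \lref{embedded}); you prove this only for geodesics of length $<2\pi$ and explicitly leave the longer one open. In short: part (c) and the ``at most two'' count survive, but parts (a) and (b) for $p$ odd are unproven as written.
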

\begin{proof}

Since all geodesics are closed with period $2q\pi$,
the geodesic flow generates a Seifert fibration on~$U(S^2)\simeq \RP^3$.
It is well known (see, e.g., \cite[\S\S 5.3, 5.4]{orlic})
that any Seifert fibration on a lens space has at most two singular leaves.
Thus all geodesics, except possibly two, have the same length $2q\pi$.
We first show that the winding number of an exceptional closed geodesics
is well defined:

\begin{lemma}\label{embedded}
 Every exceptional closed geodesic $\gamma$ is embedded.
\end{lemma}

\begin{proof}
If the length of a geodesic $\gamma$ is not a multiple of $\pi$,
then it equals $k\pi+r$ for some $0<r<\pi$.
Since $\psi$ acts on~$\gamma$ by adding $\pi$
to the parameter, $\psi^{k+1}$ acts by adding $\pi-r$,
which is positive and less than $\pi$.
Hence every arc of $\gamma$ of length $\pi-r$
is the unique minimizing geodesic from a point
to its image by $\psi^{k+1}$.
If $\gamma$ has a self intersection at a point $p_0$,
then there would exist two minimizing geodesics from $p_0$
to~$\psi^{k+1}(p_0)$ which contradicts $\pi-r<\pi$.
If the length is~$k\pi$ with $k<2q$, then $\psi^k$
is the identity on $\gamma$ and hence $\psi^k$
has infinitely many fixed points.
Since~$\psi^k\ne\Id$,
this can only happen if $\psi^k(\theta,\phi)=(-\theta,\phi)$.
It follows that, in this case, $\gamma$ lies on the equator
and is hence embedded.
\end{proof}

Notice that the geodesics through the poles are not embedded,
and hence every exceptional geodesic $\gamma$
has a well defined winding number $W$, and $W=1,-1$, or $0$
since it is embedded.

\begin{lemma}\label{length}
Let $\gamma$ be an embedded closed geodesic. Then one of the following holds:
\begin{enumerate}
 \item[$($a$)$] $\gamma$ has length $\pi/\lambda$
  and its winding number is $+1$,
 \item[$($b$)$] $\gamma$ has length $\pi/(1{-}\lambda)$
  and its winding number is $-1$.
\end{enumerate}
\end{lemma}

\begin{proof}
Recall that every geodesic is $\psi$-invariant
since $\psi(\gamma(t))=\gamma(t+\pi)$. Fix a point~$x_0$ on $\gamma$.
Let $x_0,x_1,...,x_n=x_0$ be the points of the $\psi$-orbit of~$x_0$,
enumerated in the order of appearance on~$\gamma$.
Note that $n>2$, since the case $\psi^2=\Id$ is excluded.

Let $\gamma_i$ denote the arc of~$\gamma$ from~$x_i$ to~$x_{i+1}$.
Each $\gamma_i$ is the image of $\gamma_0$
under some power of $\psi$, so they all have the same length
and the same change of angular coordinate~$\phi$.
Hence the change of angular coordinate along~$\gamma_0$,
let us call it $A$, equals $2\pi W/n$.

One of the points~$x_i$ is $\psi(x_0)$.
Let it be $x_m$ with $1\le m\le n-1$.
The angular coordinate change from $x_0$ to $x_m$ equals $mA=2\pi Wm/n$,
which is a number between $-2\pi$ and $2\pi$ since $|W|\le 1$.
Also this number is equivalent to $2\pi\lambda$ modulo $2\pi$.
Hence it is either $2\pi\lambda$ or $2\pi(\lambda{-}1)$.
If it is $2\pi\lambda$, then $Wm/n=\lambda$,
hence $W=1$ and $m/n=\lambda$.
If it is $2\pi(\lambda{-}1)$, then $Wm/n=\lambda{-}1$,
hence $W=-1$ and $m/n=1-\lambda$.

It remains to determine the length of $\gamma$.
The length from $x_0$ to $x_m$ is $\pi$, since $x_m=\psi(x_0)$.
Thus the length of $\gamma_0$ is $\pi/m$,
and hence the length of $\gamma$ is $n\pi/m$.
Substituting the above formulas for $m/n$ in terms of $\lambda$,
one obtains that the length of $\gamma$ is $\pi/\lambda$
(resp., $\pi/(1{-}\lambda)$).
\end{proof}

We now prove the existence and uniqueness
of the closed geodesics in \pref{rational}.

First observe that if $p$ is even, $q$ must be odd,
and hence $\psi^q(\theta,\phi)=(-\theta,\phi+p/2 \cdot 2\pi)=(-\theta,\phi)$.
Thus the fixed point set of $\psi^q$,
i.e., the equator, is a geodesic in both directions.
Since they are embedded, \lref{length}
implies that their lengths are~$\pi/(1{-}\lambda)$ and~$\pi/\lambda$.
Since $p>1$, both lengths are less than $2q\pi$
and hence both closed geodesics are exceptional.
This finishes~\pref{rational} when $p$ is even.
From now on we can thus assume that $p$ is odd.

Recall that if $f$ is an isometry without fixed points and of finite order,
then we obtain an $f$-invariant closed geodesic
as the minimum of the displacement function~$p\to d_F\bigl(p,f(p)\bigr)$:
Let $D$ be this minimum and let~$s$ be a minimal geodesic from~$p$ to~$f(p)$.
Then $s$ and $f(s)$ form a geodesic, i.e., there is no angle at~$f(p)$,
since otherwise the distance between the midpoints would be less than $D$.
Thus $s, f(s), f^2(s),\ldots$ form a geodesic which eventually closes up,
and $f(\gamma(t))=\gamma(t+D)$.
We call $\gamma$ a \textit{minimum displacement geodesic} of~$f$.

We apply this construction to $f=\psi^k$,
where $k$ is odd and $k\not\equiv 1 \bmod 2q$.
In this case we have $D<\pi$ since~$\psi$
is the only isometry with minimal displacement~$\pi$.
Furthermore, $f$ has no fixed points since $k$ and $p$ are odd.
This implies that a minimum displacement geodesic of~$\psi^k$
exists and is unique. Indeed, by \pref{intersect},
any two such geodesics~$\gamma_1,\gamma_2$ intersect:
$\gamma_1(t_1)=\gamma_2(t_2)$ for some $t_1,t_2$.
Then $\gamma_1(t_1+D)=\gamma_2(t_2+D)$,
which means there are two minimizing geodesics from~$\gamma_1(t_1)$
to~$\gamma_1(t_1+D)$ with $D<\pi$, a contradiction.
The same argument applies to self-intersections,
proving that the minimum displacement geodesic of~$\psi^k$ is embedded.

We can use this to show that there exists
an exceptional closed geodesic with length $<2\pi$.
Let $\gamma_0$ be the minimum displacement geodesic
for $\psi^{-1}$ and $D$ the minimum displacement value.
Then $\gamma_0(D)=\psi^{-1}(\gamma_0(0))$
and hence $\gamma_0(D+\pi)=\psi(\gamma_0(D))=\gamma_0(0)$.
Thus $\gamma_0$ closes up with length $\pi+D<2\pi$
and is hence exceptional.  We furthermore observe that~$\gamma_0$
is the only closed geodesic of length $<2\pi$.
Indeed, suppose that $\gamma_1$
is another closed geodesic with length $<2\pi$.
Then by \lref{length} both $\gamma_0$ and $\gamma_1$
have length~$\pi/(1{-}\lambda)$, which is then equal to $\pi+D$.
Hence the arc of~$\gamma_1$ from $\gamma_1(\pi)$
to $\gamma_1(0)=\psi^{-1}(\gamma_1(\pi))$ has length~$D$.
Therefore $\gamma_1$ is also a minimum displacement geodesic for~$\psi^{-1}$
and thus agrees with $\gamma_0$.
This proves part (a) in \pref{rational}.

We now prove the existence of the second exceptional geodesic when $p>1$.
Since $p$ is odd, there exists an odd~$k$ such that $kp\equiv 1\bmod 2q$
and $k\not\equiv 1\bmod 2q$.
As shown above, there is a minimum displacement geodesic~$\gamma_1$
for $\psi^k$, and $\gamma_1$ is embedded.
Since $p>1$, we have $\max\{\pi/\lambda,\pi/(1{-}\lambda)\}<2q\pi$
and hence~\lref{length} implies that $\gamma_1$ is exceptional.
We still need to show that $\gamma_1$ is different
from the first exceptional geodesic $\gamma_0$.
So assume that $\gamma_1=\gamma_0$.
Let $L=\pi/(1{-}\lambda)=2q\pi/(2q-p)$ be the length of $\gamma_0$
and let $\alpha$ denote the arc of $\gamma_0$ from $x_0=\gamma_0(0)$
to $\psi^k(x_0)=\gamma_0(k\pi)$.
Since $(pk-1)/ 2q\in\Z$, it follows that
$$
\left(k\pi- \frac{(2q-1)\pi}{2q-p}\right)/L = \frac{(2q-p)k-(2q-1)}{ 2q}\in\Z
$$
Furthermore $\displaystyle0<\frac{(2q{-}1)\pi}{2q{-}p}<L$
and hence the length of~$\alpha$
equals $\displaystyle\frac{(2q{-}1)\pi}{2q{-}p}>\pi$.
But then $\alpha$ is not minimizing and therefore $\gamma_0$
is not a minimum displacement geodesic for $\psi^k$.
This proves that $\gamma_1$ is the second exceptional geodesic
and by~\lref{length} has length $\pi/\lambda$.

If $p=1$, we have $\pi/\lambda=2q\pi$
and hence an exceptional geodesic has length $\pi/(1{-}\lambda)<2\pi$.
But as we saw above, there can be only one such geodesic.
This finishes the proof of~\pref{rational}.
\end{proof}

Combining Propositions~\ref{irrational} and~\ref{rational}
finishes the proof of \tref{geodesic} in the Introduction.
Notice though that $\displaystyle\mu=\frac{1}{2(1-\lambda)}$.

We point out that the shortest closed geodesic~$\gamma$
has the following geometric interpretation:
As follows from the proof of~\tref{geodesic},
$\gamma$ is a minimum displacement geodesic for $\psi^{-1}$
and, since $d(p,\psi^{-1}(p))=d(\psi(p),p)$,
it can be viewed as realizing the shortest return time
from $\psi(p)$ to~$p$.

\section{The proof of \tref{geodesicflow} } \label{sec:GeodFlow}

We now prove the conjugacy of the geodesic flow
of two Finsler metrics $F_\lambda$
with the same rotation angle~$\lambda$.

We will first show that the geodesic flow $g_t$,
regarded as an action of $\R$ on $\mathrm{U}(S^2)\simeq\RP^3$
is conjugate to an orthogonal action, i.e.,
a homomorphism $\R\to \mathrm{SO}(4)/\{\pm\Id\}$
where $\mathrm{SO}(4)$, acting orthogonally on $S^3$,
descends to an action on $\RP^3$.

If $\lambda=p/2q$ is rational,
then all geodesics are closed with common minimal period~$2q\pi$
and hence $g_t$ induces an effective action of $S^1$ on $\RP^3$.
But a circle action on $\RP^3$ is conjugate to an orthogonal action,
see~\cite{Raymond}, which can also be regarded
as a homomorphism $\R\to \mathrm{SO}(4)/\{\pm\Id\}$.

If $\lambda$ is irrational, we saw in \pref{irrational}
that we also have a Killing vector field $X=\frac{\partial }{\partial \phi}$
with flow $\tau_t$, which induces a vector field $X^*$ on $\mathrm{U}(S^2)$
with flow $\tau^*_t=d(\tau_t)$ and $\tau^*_{2\pi}=\Id$.
Clearly, $g_t$ and $\tau^*_s$ commute, since $\tau_t$
acts by isometries and hence $[X^*,G]=0$,
where $G$ is the geodesic flow vector field.
From the definition of $\psi$, it also follows
that $g_{2\pi}=\tau^*_{4\lambda\pi}$.
Thus, the flow of the vector field $Y=G-2\lambda X$
is $2\pi$ periodic. Hence $X$ and $Y$ commute
and generate an action of $T^2$ on $\mathrm{U}(S^2)$.
In~\cite{Ne} it was shown that, up to an automorphism of~$T^2$
and a diffeomorphism of $\RP^3$,
there exists a unique effective $T^2$ action on $\RP^3$
which can hence be thought of as an  orthogonal action,
i.e., a homomorphism $T^2\to \mathrm{SO}(4)/\{\pm\Id\}$.
Hence the same is true for the flow $g_t$ of $G$.

Thus in both cases, the geodesic flow is conjugate
to a homomorphism $\R\to \mathrm{SO}(4)/\{\pm\Id\}$.
Such a homomorphism is furthermore conjugate,
via an element of $O(4)$, to a block diagonal one of the form
$t\mapsto \mathrm{diag}(R(at),R(b\,t))/\{\pm\Id\}$,
with $a,b\in\R$, $a\ge b\ge 0$
and where $R(t)$ is a rotation of $\R^2$ with angle~$t$.
Furthermore, $b>0$ since the geodesic flow is almost free.
But such a homomorphism is uniquely characterized
by its smallest minimal periods, i.e.,
$\pi/a$ and $\pi/b$, corresponding to the orbits
through the images of $(1,0,0,0)$ and $(0,0,0,1)\in S^3$ in~$\RP^3$.
As we saw in~\pref{irrational} and~\pref{rational},
the minimal periods of the geodesic flow are $\pi/\lambda$
and $\pi/(1{-}\lambda)$. This implies that $\lambda$
uniquely determines the geodesic flow up to conjugacy.
Since $\pi/(1{-}\lambda)$ is also the length of the shortest closed geodesic,
this finishes the proof of \tref{geodesicflow}.

We now indicate how to show that  the analogs of our results
also hold for Finsler metrics on $\RP^2$ with constant flag curvature.
First we can apply the previous results to the lift of the metric to~$S^2$.
In particular this defines the geometric parameter~$\lambda$.
Then, by~\cite{Raymond} and~\cite{Ne}, the geodesic flow
is again conjugate to an orthogonal action
since the unit tangent bundle of~$\RP^2$
is the lens space $L(4,1)=S^3/\Z_4$ with $\Z_4$
generated by $\textrm{diag}(R(\frac{\pi}{2}),R(\frac{\pi}{2}))$.
The geodesic flow is thus conjugate to
$t\to \textrm{diag}(R(at),R(b\,t))/\Z_4$, with $a,b\in\R$.
Note that the parameters $a$ and $b$ are the same,
up to a sign, as the similar parameters
for the covering metric on~$S^2$
constructed in the proof of \tref{geodesicflow}.
Indeed, there exists only one two-sheeted covering of~$L(4,1)$;
it is the natural projection from~$\RP^3=S^3/\Z_2$
to~$L(4,1)=S^3/\Z_4$.
Hence, $|a|+|b|=1$, and the two minimal periods on $L(4,1)$
are~$\pi/(2|a|)$ and $\pi/(2|b|)$.
In view of orthogonal conjugations in~$L(4,1)$,
we may assume that $a\ge |b|> 0$.
However, unlike the case of~$\RP^3$,
orthogonal conjugations cannot change the signs of $a$ and $b$ individually.

To distinguish the actions with invariants $(a,b)$ and $(a,-b)$,
we compute the return map, i.e.,
the derivative of the flow orthogonal to the orbit,
for the shortest periodic orbit $t\mapsto \mathrm{diag}(R(at),0)$.
The image of the orbit in $L(4,1)$ closes at time $t_0=\pi/(2a)$
and hence the return map
is given by~$R\bigl( b\,\pi/(2a)\bigr)\circ R(\tfrac12\pi)^{-1}$.
It is hence a rotation whose angle,
up to a choice of orientation on~$L(4,1)$,
lies in $[0,\tfrac12\pi )$ if $b>0$
and in  $(\tfrac12\pi ,\pi ]$  if $b<0$.
In particular, the action with $b>0$
is not conjugate to the one with $b<0$.

For a Finsler metric on~$\RP^2$ with flag curvature~$1$,
consider the closed geodesic~$\gamma$
corresponding to the above shortest orbit.
It has length~$\pi/(2a)=\pi/\bigl(2(1{-}\lambda)\bigr)$.
The return map of the geodesic flow along~$\gamma$
can be computed in terms of Jacobi fields
in a parallel basis. The deck transformation
of the two-fold cover is orientation-reversing,
and it preserves the lift of~$\gamma$.
Hence, the parallel translation along~$\gamma$
is a rotation by~$\pi$ in~$T_{\dot\gamma(0)}\mathrm{U}(\RP^2)$.
Looking at Jacobi fields, one sees
that the rotation angle of the return map
differs from that of parallel translation
by~$\pm\textrm{length}(\gamma)$.
Hence, the rotation angle
(normalized to the interval~$[0,\pi]$)
equals~$\pi-\pi/\bigl(2(1{-}\lambda)\bigr)
\in [0,\frac{\pi}{2})$. Therefore,~$b>0$.

Thus, as before, the geodesic flow of the Finsler metric
is determined by~$\lambda$ and, hence, also by the length
of the shortest closed geodesic.

Altogether, we see that the statements in
\tref{geodesicflow}, \cref{cor:1}, and ~\cref{cor:new}
also hold for Finsler metrics on $\RP^2$ with constant flag curvature.

\section{Zermelo deformations and the proof of \tref{perdeform} } \label{sec:3}

In this section, we discuss Zermelo deformations of Finsler metrics
with respect to a Killing vector field. Here we work in any dimension.

Let $F$ be a Finsler metric on $M^n$ and $X$ a Killing vector field of~$F$.
Under the Legendre transform $D\left(\frac{1}{2}F^2\right)\colon TM\to T^*M$,
we obtain the dual norm $F^*:T^*M \to \R$,
whose Hamiltonian flow with respect to the canonical symplectic structure
on $T^*M$ corresponds to the geodesic flow of~$F$ via the Legendre transform.

Let $\alpha\in\R$ be a constant.
The function $F^*_\alpha(\xi):=F^*(\xi)+\alpha\,\xi(X)$
defines a norm on~$T^*M$ if and only if $F(-\alpha X)<1$.
Applying to it the Legendre transform
$D\left(\frac{1}{2} (F^*_\alpha)^2\right)$
with respect to $\frac{1}{2}(F^*_\alpha)^2$,
we obtain a Finsler metric on $M$ which we denote by $F_\alpha$.

The Finsler metric~$F_\alpha$ is the \textit{Zermelo deformation}
of~$F$ with respect to the Killing field $\alpha X$.
It is easy to see that, for each $p$, its unit sphere in $T_pM$
is the unit sphere of~$F$ in~$T_pM$ translated by~$\alpha X(p)$.
Such metrics were examined in detail in~\cite{Ziller}
and can be viewed as generalizations of the examples of Katok~\cite{Katok},
in which~$F$ was a Riemannian metric of constant Gauss curvature on~$S^2$.

Denote by $\tau_t$ the flow of $X$
and by $d^*\tau_t$ its natural lift to $T^*M$;
it is generated by the Hamiltonian function
$\xi\in T^*M \mapsto \xi(X) \in \R$.
Denote by $\varphi_t$ the geodesic flow of $F$ on $T^*M$,
which we view as the flow of the Hamiltonian~$F^*$.
Then, the geodesic flow of~$F_\alpha$ is, by construction,
the flow of the Hamiltonian $F^*_\alpha$.
Since $\tau_t$ acts by isometries of $F$,
the flows $d^*\tau_t$ and $\varphi_t$ commute.
Hence, the flow of the Hamiltonian $F^*_\alpha=F^*(\xi)+\alpha \xi(X)$
is the composition $\varphi_t \circ d^*\tau_{\alpha t}$.
Thus, if $c(t)$ is a normal geodesic of~$F$,
then $c_\alpha(t)=\tau_{\alpha t}(c(t))$ is a normal geodesic of~$F_\alpha$.

In \cite{FM,HuangMo}, as already observed in~\cite{Foulon},
see also \cite{BRS},
it was shown that if $F$ has constant flag curvature~$1$,
then $F_\alpha$ does as well.
If $\psi$ is the isometry defined in~\sref{prthm1} for~$F$,
and $\psi_\alpha$ is its analog for $F_\alpha$,
then $\psi_\alpha=\psi\circ\tau_{\alpha\pi}$.

\begin{proposition}\label{perdeform2}
Let $F$ be a Finsler metric on a compact $n$-manifold~$M$
with constant flag curvature~$1$
that has a geodesic that is not closed.
Then there exists a Killing vector field~$X$
such that the Zermelo deformation~$F_1$
(i.e., $F^*_1(\xi)=F^*(\xi)+ \xi(X)$)
is a Finsler metric with constant flag curvature~$1$
and all of its geodesics are closed.
\end{proposition}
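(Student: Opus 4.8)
The plan is to reduce the assertion to the statement that the isometry $\psi_1$ associated to the deformed metric $F_1$ has \emph{finite order}, and then to produce the required Killing field by a density argument inside $\Isom(F)$. Recall from the discussion preceding the proposition that, for any Killing field $X$ of $F$ with flow $\tau_t=\exp(tX)$, the Zermelo deformation $F_\alpha$ again has constant flag curvature~$1$ and its associated isometry (in the sense of \sref{prthm1}) is $\psi_\alpha=\psi\circ\tau_{\alpha\pi}$. Since $F_\alpha$ has constant flag curvature~$1$, every normal geodesic $\gamma$ of $F_\alpha$ satisfies $\psi_\alpha(\gamma(t))=\gamma(t+\pi)$, and iterating gives $\psi_\alpha^{\,N}(\gamma(t))=\gamma(t+N\pi)$. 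Hence if $\psi_\alpha^{\,N}=\Id$ then every normal geodesic—and so every geodesic—of $F_\alpha$ is closed. It therefore suffices to find a Killing field $X$ of $F$, small enough that $F(-X)<1$ (so that $F_1^*(\xi)=F^*(\xi)+\xi(X)$ is a genuine norm), for which $\psi_1=\psi\circ\exp(\pi X)$ has finite order. Note that the hypothesis is precisely what creates room: if $\psi$ itself had finite order $N$, then $\gamma(t)=\psi^{N}(\gamma(t))=\gamma(t+N\pi)$ would make every geodesic of $F$ closed, contrary to assumption. Thus $\psi$ has \emph{infinite} order.

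Next I fix the ambient group. Let $G=\Isom(F)$, a compact Lie group, and let $A=\overline{\langle\psi\rangle}\subseteq G$ be the closure of the cyclic group generated by $\psi$; it is a compact abelian Lie group. Write $T=A_0$ for its identity component, a torus, with Lie algebra $\mathfrak t$. Because $\psi$ has infinite order, $A$ is infinite, so $\dim T\ge 1$ and $\mathfrak t\neq\{0\}$. Each $X\in\mathfrak t$ is a Killing field of $F$ (an element of the Lie algebra of $G$) whose flow is $\tau_t=\exp(tX)$, and $\exp(\pi X)\in T\subseteq A$; since $A$ is abelian, $\exp(\pi X)$ commutes with $\psi$. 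Consequently, as $X$ varies over $\mathfrak t$, the element $\psi_1=\psi\,\exp(\pi X)$ varies over the single coset $\psi T$ of $T$ in $A$.

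The heart of the argument is a density statement in this coset. The torsion (finite-order) elements of a torus are dense in it; using that $T$ is divisible, so that $A\cong T\times(A/T)$, the torsion elements of $A$ lying in $\psi T$ are exactly $\psi$ times the torsion elements of $T$, and these are dense in $\psi T$. On the other hand, $\exp$ is a local diffeomorphism near $0\in\mathfrak t$, so $X\mapsto\psi\,\exp(\pi X)$ maps a small neighborhood of $0$ onto a neighborhood of $\psi$ in $\psi T$. Shrinking this neighborhood so that $F(-X)<1$ throughout, density yields a small $X\in\mathfrak t$ with $\psi_1=\psi\,\exp(\pi X)$ of finite order. For this $X$ the deformation $F_1$ is a well-defined Finsler metric of constant flag curvature~$1$, and $\psi_1^{\,N}=\Id$ forces all of its geodesics to close up, as required.

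The one genuine subtlety—the main obstacle—is that $A=\overline{\langle\psi\rangle}$ may be \emph{disconnected}, so that $\psi$ need not lie in the torus $T$; one therefore cannot simply write $\psi=\exp(\pi X_0)$ and perturb $X_0$ to a rational (finite-order) point. Working in the coset $\psi T$, and observing that it still carries a dense set of torsion elements, is what circumvents this. The secondary constraint $F(-X)<1$ is harmless precisely because the density argument delivers $X$ arbitrarily close to $0$; this same feature simultaneously gives the ``arbitrarily small'' refinement asserted in \tref{perdeform}.
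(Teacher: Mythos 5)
Your proof is correct, and its skeleton matches the paper's: a non-closed geodesic forces $\psi$ to have infinite order; the closure $A$ of $\langle\psi\rangle$ in the compact group $\Isom(F)$ is a compact abelian Lie group whose identity component is a torus $T$ of positive dimension; and the Killing field $X$ is taken from the Lie algebra of $T$, small enough that $F(-X)<1$, so that the deformed isometry $\psi_1=\psi\circ\tau_\pi$ has finite order. Where you genuinely differ is the endgame. The paper needs no structure theory of $A$: by compactness it finds $k$ with $\psi^k\in T$ arbitrarily close to $\Id$, solves $\psi^k=\tau_{-k\pi}$ for a small $X$ (local surjectivity of $\exp$ in $T$), and then gets $\tilde\psi^{\,k}=\psi^k\circ\tau_{k\pi}=\Id$ \emph{exactly}, which also exhibits an explicit common period $k\pi$ for the deformed geodesics. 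You instead perturb $\psi$ inside its coset $\psi T$ to a nearby torsion element, which requires density of torsion points in each component of $A$, hence the splitting $A\cong T\times(A/T)$. That route works, but note a slip in your formulation: the torsion elements of $\psi T$ are \emph{not} ``$\psi$ times the torsion elements of $T$.'' Writing $\psi=t_0f_0$ with $t_0\in T$ and $f_0$ of finite order in a complementary finite subgroup, the torsion elements of $\psi T=f_0T$ are $f_0\cdot T_{\mathrm{tors}}$; if $t_0$ is not itself torsion, this set is disjoint from $\psi\cdot T_{\mathrm{tors}}$. Since $f_0\cdot T_{\mathrm{tors}}$ is still dense in the coset, your density conclusion, and hence your proof, stands after this one-line correction; and like the paper's argument it yields the arbitrarily-small-$X$ refinement quoted in \tref{perdeform}.
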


\begin{proof}
Since there exists a geodesic that is not closed,
the map~$\psi$ does not have finite order.
Thus $\{\psi^k\mid k\in\Z\}$ is an abelian subgroup
of the compact Lie group $\Isom(F)$ that is not closed.
Its closure is a compact abelian group
and hence has finitely many components,
with its identity component being a compact torus $T$ of positive dimension.
Thus some positive power of~$\psi$ lies in~$T$,
and, hence, for some large $k$ the isometry $\psi^k$
is close to the identity in~$\Isom(F)$.
Hence there exists a Killing vector field $X$
of small length and with flow $\tau_t$
such that $\tau_{-k\pi}=\psi^k$.
For the Finsler metric $\tilde F:= F_1$,
the corresponding map $\tilde \psi$
satisfies $\tilde \psi^k=\Id$ since, as explained above,
$\tilde \psi^k =\psi^k \circ\tau_{ k \pi}$.
\end{proof}

Notice that we can choose the vector field~$X$
such that it is arbitrary small,
which implies that the metric~$\tilde F$
can be chosen arbitrarily close to~$F$,
and that~\pref{perdeform2} reduces the classification
of Finsler metrics on~$S^n$ with constant flag curvature~$1$
to those for which all geodesics are closed.

In dimension $2$ we can say more:

\begin{proposition} \label{prop:4}
Let $F=F_\lambda$ be a Finsler metric on~$S^2$
with constant flag curvature~$1$ and rotation angle~$\lambda$,
that, in addition, admits a nontrivial Killing vector field~$X$.
Then, the Zermelo deformation
$F^*_\alpha(\xi)=F^*(\xi)+\alpha \xi(X)$ has the following properties:
\begin{enumerate}
 \item[$($a$)$] $F_\alpha$ is a Finsler metric
       for all $-2\lambda<\alpha<2{-}2\lambda$,
 \item[$($b$)$] The rotation angle for $F_\alpha$
       is $\lambda_\alpha=\lambda+\alpha/2$,
 \item[$($c$)$] There exists a unique $\alpha$ such that $F_\alpha$
       has all geodesics closed with the same prime length~$2\pi$.
\end{enumerate}
\end{proposition}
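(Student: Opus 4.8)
The plan is to dispatch~(b) directly from the formula $\psi_\alpha=\psi\circ\tau_{\alpha\pi}$ recorded at the end of~\sref{sec:3}, to spend the real effort on the sharp endpoints in~(a), and finally to read off~(c). Throughout I normalize the Killing field as $X=\partial/\partial\phi$, so that its flow is $\tau_t(\theta,\phi)=(\theta,\phi+t)$; this is the normalization implicit in $\psi_\alpha=\psi\circ\tau_{\alpha\pi}$, and by~\rref{rem:1} it determines $X$ up to sign whenever $F$ is non-Riemannian.

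For~(b): combining~\eqref{eq:psi} with $\tau_t(\theta,\phi)=(\theta,\phi+t)$ gives $\psi_\alpha(\theta,\phi)=\psi(\theta,\phi+\alpha\pi)=\bigl(-\theta,\ \phi+2\pi(\lambda+\tfrac12\alpha)\bigr)$. I would then check that $(\theta,\phi)$ remain admissible normal-form coordinates for $F_\alpha$: the round metric $g$ is invariant under $\tau_t$ and under $\psi_\alpha$ (a rotation composed with the equatorial reflection), while $\psi_\alpha^2$ still fixes exactly the two poles. Hence $(\theta,\phi)$ satisfy~\eqref{eq:psi} for $F_\alpha$ with rotation parameter $\lambda_\alpha=\lambda+\tfrac12\alpha$, which is~(b); note in particular that $\psi_\alpha^2=\Id$ holds exactly when $\lambda_\alpha=\tfrac12$.

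The core of the argument is~(a). By the criterion in~\sref{sec:3}, $F_\alpha$ is a Finsler metric precisely when $F(-\alpha X(p))<1$ for every $p$; by positive homogeneity of $F$ this is equivalent to the two conditions $-\alpha\,\max_p F(X(p))<1$ and $\alpha\,\max_p F(-X(p))<1$. So it suffices to prove $\max_p F(X(p))=\tfrac{1}{2\lambda}$ and $\max_p F(-X(p))=\tfrac{1}{2(1-\lambda)}$, both attained on the equator, which then give the range $(-2\lambda,\,2-2\lambda)$. The tool is the Clairaut integral $\Phi(v)=\xi_v(X)$, with $\xi_v=\partial F/\partial v$ the Legendre dual of a unit vector $v$; since $X$ is Killing, Noether's theorem makes $\Phi$ constant along every normal geodesic. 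Two observations then close the estimate. First, as $v$ ranges over the $F$-indicatrix at a point of latitude $\theta$, $\xi_v$ ranges over the dual indicatrix, so the duality $F(w)=\max_{F^*(\xi)\le1}\xi(w)$ yields $\max_{F(v)=1}\Phi(v)=F(X(\theta))$ and $\min_{F(v)=1}\Phi(v)=-F(-X(\theta))$, the extrema being attained at the $\pm\phi$-tangent unit vectors; on the equator these values are $\tfrac{1}{2\lambda}$ and $\tfrac{1}{2(1-\lambda)}$, computed as in~\pref{irrational}(b) from the equator being a geodesic of $F$-length $\pi/\lambda$ (resp.\ $\pi/(1-\lambda)$)---and the same computation applies for rational $\lambda$, since $(\theta,\phi)\mapsto(-\theta,\phi)$, the composite of $\psi$ with a rotation, is again an isometry. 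Second, every geodesic meets the equator: writing $\gamma(t)=(\theta(t),\phi(t))$, the relation $\gamma(t+\pi)=\psi(\gamma(t))$ together with~\eqref{eq:psi} gives $\theta(t+\pi)=-\theta(t)$, so $\theta$ changes sign on every interval of length~$\pi$. Combining the two: any unit vector $v$ lies on a geodesic that crosses the equator at some unit $v_0$, so $\Phi(v)=\Phi(v_0)\in[-\tfrac{1}{2(1-\lambda)},\,\tfrac{1}{2\lambda}]$; feeding in the $\pm\phi$-tangent vectors at an arbitrary latitude gives $F(X(\theta))\le\tfrac{1}{2\lambda}$ and $F(-X(\theta))\le\tfrac{1}{2(1-\lambda)}$, with equality on the equator.

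Finally~(c): by~\tref{geodesic}(a) the only value of the invariant $\mu=\tfrac{1}{2(1-\lambda_\alpha)}$ for which all geodesics of $F_\alpha$ are closed of common prime length~$2\pi$ is $\mu=1$, i.e.\ $\lambda_\alpha=\tfrac12$, equivalently $\psi_\alpha^2=\Id$. By~(b) this means $\lambda+\tfrac12\alpha=\tfrac12$, and since $\alpha\mapsto\lambda+\tfrac12\alpha$ is strictly monotone the unique solution is $\alpha=1-2\lambda$, which lies in $(-2\lambda,2-2\lambda)$ by~(a). I expect the main obstacle to be the \emph{sharpness} in~(a)---showing the two maxima are genuinely attained on the equator, not merely bounded there---for which the equator-crossing identity $\theta(t+\pi)=-\theta(t)$ and the conservation of the Clairaut integral are the essential tools; a secondary point requiring care is confirming that $(\theta,\phi)$ are legitimate normal-form coordinates for each $F_\alpha$, so that~(b) really records the invariant $\lambda_\alpha$.
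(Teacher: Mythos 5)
Your proof is correct, and for parts (b) and (c) it follows the paper's own route: (b) is the one-line computation from $\psi_\alpha=\psi\circ\tau_{\alpha\pi}$ together with~\eqref{eq:psi} (your extra check that $(\theta,\phi)$ remain valid normal-form coordinates for $F_\alpha$ is a point the paper glosses over), and (c) is exactly the paper's choice $\alpha=1-2\lambda$ forcing $\lambda_\alpha=\tfrac12$, i.e.\ $\psi_\alpha^2=\Id$. Where you genuinely diverge is in (a). The paper also reduces (a) to locating the extrema of the first integral $\xi\mapsto\xi(X)$ on the unit cotangent bundle, but it does so by a critical-point argument: at a point where $d(\xi(X))$ is proportional to $dF^*$, the geodesic flow direction is proportional to the lifted flow of $X$, so a latitude circle $\{\theta=\mathrm{const}\neq 0\}$ would be a geodesic, which is impossible since $\psi$ exchanges the hemispheres; compactness then forces the maximum and minimum to sit over the equator. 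You instead transport the bound dynamically: the Clairaut integral is constant along geodesics, every geodesic crosses the equator because $\theta(t+\pi)=-\theta(t)$, and fiberwise convex duality identifies the extrema of the integral with $F(\pm X)$. Your route is more elementary --- it avoids the flow-invariance argument needed to upgrade a pointwise proportionality of differentials to the statement that a latitude circle is a geodesic --- and it makes the sharpness (equality exactly on the equator) transparent; the paper's route fits the integrable-systems viewpoint and does not need the equator-crossing fact. Both arguments then use the same equator values, coming from the equator being a geodesic of length $\pi/\lambda$, resp.\ $\pi/(1-\lambda)$. One further point in your favor: the extreme values you compute, $\tfrac{1}{2\lambda}$ at $\gamma^+$ and $-\tfrac{1}{2(1-\lambda)}$ at $\gamma^-$, are the ones that actually yield the stated range $(-2\lambda,\,2-2\lambda)$; the values printed in the paper's proof ($2-2\lambda$ and $2\lambda$) appear to be a slip, since they would instead give $\bigl(-\tfrac{1}{2-2\lambda},\,\tfrac{1}{2\lambda}\bigr)$.
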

\begin{proof}
By Remark~\ref{rem:1} we may assume that,
up to multiplication by a constant,
$X=\frac{\partial }{\partial \phi}$
in the coordinates $(\theta, \phi)$ from~\sref{prthm1}.
Since in the coordinates~$(\theta, \phi)$,
the mapping $\psi$ is given by (\ref{eq:psi})
and  $X$ is proportional to $\frac{\partial }{\partial \phi}$ with a constant coefficient, the equator $\{\theta=0\}$
is the fixed point set of an isometry and is therefore a reversible geodesic. Depending on the orientation, we denote this geodesic by $\gamma^{\pm}$.

In order to prove (a) and (b),
consider the function on the cotangent bundle
given by $\xi\mapsto \xi(X)$.
Since $X$ is a Killing vector field,
it is an integral for the geodesic flow.
Then, the differential of this function
can be proportional to the differential of $F$
only at the points $(x,\xi)\in TM$ (assuming $\xi\ne0$)
such that $x$ lies  over  the  geodesics $\gamma^\pm$.
Indeed, otherwise the circle $\{\theta= \textrm{const}\ne 0\}$
is a geodesic, which is impossible
since the points $p$ and $\psi(p)$
lie on different sides of the equator~$\{\theta=0\}$.

Thus, the function $\xi\mapsto\xi(X)$
restricted to the unit cotangent bundle
has zero differential only at the points
corresponding to the geodesics $\gamma^{\pm}$.
By direct calculation, we see that,
at the points corresponding to $\gamma^+$, its value is $2{-}2\lambda$
and, at the points corresponding to $\gamma^-$, its value is $ 2\lambda$.
Here we use the convention  that the geodesic $\gamma^+$
is oriented in the $\frac{\partial}{\partial \phi}$-direction,
and $\gamma^-$ in the $-\frac{\partial}{\partial \phi}$-direction.
Thus, the function $\xi(X)$ has maximum $2{-}2\lambda$ and minimum $-2\lambda$. Hence~$F_\alpha$ is positive for all $-2\lambda<\alpha<2{-}2\lambda$
on nonzero vectors, which implies that $F_\alpha$ is a Finsler metric.

In order to prove (c), recall that $\psi_\alpha=\psi\circ \tau_{\alpha\pi}$,
which implies $\lambda_\alpha=\lambda+\alpha/2$.
Thus for part (c) we simply choose $\alpha=1{-}2\lambda$
and hence $\lambda_\alpha=\tfrac12$.
\end{proof}

Thus any Finsler metric $F_\lambda$ on $S^2$ with~$\lambda$ irrational
can be viewed as a Zermelo deformation of one for which all geodesics
are closed with the same length and which, in addition, admits a Killing vector field.  Meanwhile, we note that not all Finsler metrics on $S^2$
with constant flag curvature admit a nontrivial Killing field; in~\cite{Bryant1996,Bryant1997}, many examples were constructed
with all geodesics closed of length~$2\pi$ and finite isometry group.

Finally, observe that, for any $\lambda\in (0,\tfrac12]$,
the rotation angle~$\lambda_\alpha$ takes on each possible value
in $(0,\tfrac12]$ uniquely.
As already mentioned above, if $F$ is the round metric on~$S^2$,
then $F_\lambda$ is known as a Katok metric~\cite{Katok}.
This implies \cref{cor:1} in the Introduction.

\section{The proof of \tref{entropy}} \label{sec:5}

First observe that the $\pi$-shift along the geodesic flow
coincides with the diffeomorphism of $\mathrm{U}(M)$ induced by $\psi$,
which implies that it is an isometry of the natural metric on $\mathrm{U}(M)$
induced by the Binet-Legendre metric.
But any isometry of a compact metric space has zero topological entropy.

Next, we prove that the geodesic flow of~$F$,
viewed now as the Hamiltonian system on the cotangent bundle to~$M$
without the zero section, which we denote by~$T^*M\setminus\{0\}$,
and equipped with the canonical symplectic form and whose Hamiltonian~$F^*$
is the Legendre transform of~$F$ with respect to~$\tfrac12\,F^2$,
is Liouville integrable,
i.e., that there exist $n$ integrals that are functionally independent
almost everywhere and are in involution.

As in the proof of \pref{perdeform2},
we consider the closure of the group generated by $\psi$
in the group $\Isom(F)$.  Its $\Id$ component is a torus $T^k$
of dimension $k$, where we also allow $k= 0$.
The  action of the torus~$T^k$ on $M$
induces a Hamiltonian action of $T^k$ on the cotangent space $T^*M$.
The generators of this action, which we denote by $\xi_1,...,\xi_k$,
are the vector fields that generate the action of $T^k$ on $M$,
viewed as functions on $T^*M$.
We assume that the Hamiltonian flows of~$\xi_i$
are periodic with minimal period $1$.

Because of compactness of the isometry group of $M$,
there exists $\ell\in\mathbb{N}$
such that $\psi^\ell$ lies in the torus $T^k$.
This implies that the Arnold-Poisson action of $\R^{k+1}$
on $T^*M\setminus\{0\} $ generated
by the $k+1$ functions $ \xi_1,...,\xi_k, F^*$,
is a Hamiltonian action of a $k+1$-dimensional torus $T^{k+1}$.
The generators of this action are the functions $\xi_1,...,\xi_k$
and an appropriate linear combination of $F^*$ and $\xi_1,...,\xi_k$
constructed arguing as in~\sref{sec:3},
see the proof of \pref{perdeform2} there.
Note that $F^*$ is functionally independent from $\xi_1,..., \xi_k$.
Indeed, observe that in the proof of \pref{perdeform2},
we have shown that there exists a linear combination
$\xi= c_1 \xi_1+...+c_k \xi_k$ with constant coefficients
such that the flows  of the Hamiltonian systems
generated by $\xi$ and by $F^*$ satisfy
$\phi^\xi_{\pi}= \phi^{F^*}_{ \ell \pi }$.
If $F^*$ were functionally dependent on $\xi_1,..., \xi_k$
in some open subset, then the above observation
implies that, on this open set, the Hamiltonian vector field of $F^*$
is proportional to a  linear combination of $\xi_1,..., \xi_k$
with constant coefficients, which is clearly impossible.

We denote by $\textrm{Reg}$ the open dense set of regular points of the action, which is defined as the set of the points of $T^*M\setminus\{0\}$
where the differentials of the functions $\xi_1,...,\xi_k, F^*$
are linearly independent.  Next, consider the symplectic reduction
of $\textrm{Reg}$ with respect to $T^{k+1}$,
which is a smooth symplectic manifold of dimension $2n{-}2k{-}2$.
It is known that one can choose $n{-}k{-}1$ integrals
on any symplectic manifold of dimension $2n{-}2k{-}2$,
such that they are in involution and are functionally independent
almost everywhere, see~\cite[p.145]{Fomenko} or~\cite{BB}.
The pullback of these functions to $\textrm{Reg}$
gives us $n{-}k{-}1$ functions $I_{k+1},...,I_{n-1}$ on $\textrm{Reg}$,
such that $\xi_1,...,\xi_k, I_{k+1},...,I_{n-1}, F^*$
are in involution and functionally independent almost everywhere.
Furthermore, examining the proofs of~\cite[p.145]{Fomenko} or~\cite{BB},
we see that these functions can be extended to smooth functions
on all of $T^*M\setminus\{0\}$.
Hence the geodesic flow of $F$ is Liouville-integrable.

\section{The proof of \tref{Frankel} }  \label{example}

We first discuss a generalization of \tref{Frankel}
for Finsler metrics that are not necessarily reversible.
For this, we make the following definition.
Let~$\gamma_1$ and $\gamma_2$ be embedded closed geodesics
on~$S^2$ that do not intersect.
Then there exists a cylinder $S^1\times[0,1]$
whose boundary is the union of $\gamma_1$ and $\gamma_2$.
We say that the two closed curves \textit{have the same orientation}
if an orientation on~$C$ induces opposite orientations
on the boundary components~$\gamma_1$ and~$\gamma_2$.

\begin{proposition}\label{perdeform1}
Let $F$ be a Finsler metric on~$S^2$ with positive flag curvature.
If $\gamma_1$ and $\gamma_2$ are two closed geodesics
that are geodesically reversible,
or if they are embedded and have the same orientation,
then they intersect.
\end{proposition}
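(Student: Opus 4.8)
The plan is to run the Finsler analogue of Frankel's second-variation argument, with the reversibility/orientation hypothesis entering exactly to force the boundary terms to vanish. So suppose, for contradiction, that $\gamma_1\cap\gamma_2=\emptyset$. Since $S^2$ is compact, the directed distance $d_F(\gamma_1,\gamma_2):=\min\{d_F(p,q):p\in\gamma_1,\,q\in\gamma_2\}$ is positive and is realized by a unit-speed minimizing geodesic $\sigma\colon[0,L]\to S^2$ with $\sigma(0)=p_1\in\gamma_1$ and $\sigma(L)=p_2\in\gamma_2$. The first variation formula shows that $\sigma$ meets each $\gamma_i$ orthogonally in the sense that $g_{\dot\sigma}(\dot\sigma,\cdot)$ annihilates $T_{p_i}\gamma_i$; since $\dim S^2=2$, this means $T_{p_i}\gamma_i$ is exactly the $g_{\dot\sigma}$-orthogonal complement of $\dot\sigma$ at $p_i$.

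Next I would introduce the competing variation field. Let $W$ be the parallel field along $\sigma$ (for the Chern connection with reference $\dot\sigma$) with $W(0)$ a unit vector tangent to $\gamma_1$. Parallel transport along the geodesic preserves $g_{\dot\sigma}$ and preserves $g_{\dot\sigma}$-orthogonality to $\dot\sigma$, so $W$ is a unit field orthogonal to $\dot\sigma$; by the dimension count its endpoint value $W(L)$ is automatically tangent to $\gamma_2$. Pick any variation $\sigma_s$ with $\partial_s\sigma|_{s=0}=W$ and $\sigma_s(0)\in\gamma_1$, $\sigma_s(L)\in\gamma_2$. Because $\sigma$ minimizes length among all curves from $\gamma_1$ to $\gamma_2$, we have $L(\sigma_s)\ge L(\sigma_0)$ and hence $\frac{d^2}{ds^2}\big|_{0}L(\sigma_s)\ge 0$. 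On the other hand, the standard second-variation formula (see \cite{nankai}) gives $\frac{d^2}{ds^2}\big|_{0}L(\sigma_s)=\int_0^L\big(g_{\dot\sigma}(D_{\dot\sigma}W,D_{\dot\sigma}W)-g_{\dot\sigma}(R_{\dot\sigma}W,W)\big)\,dt+(B_2-B_1)$, where $R_{\dot\sigma}$ is the flag-curvature operator and $B_i$ is a boundary contribution at $p_i$. Since $W$ is parallel, $D_{\dot\sigma}W=0$; and since $W\perp\dot\sigma$ is a unit field and the flag curvature is positive, $g_{\dot\sigma}(R_{\dot\sigma}W,W)=K\,g_{\dot\sigma}(W,W)>0$. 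Thus the interior integral equals $-\int_0^L K\,dt<0$, and a contradiction will follow as soon as $B_1=B_2=0$.

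The heart of the argument is therefore the identification of the boundary terms: $B_i$ is the second fundamental form of $\gamma_i$ in the transverse direction $\dot\sigma$, and it vanishes precisely when $\gamma_i$, traversed in the direction of $W$ at $p_i$, is a geodesic, i.e. when the endpoint curve may be chosen to be a constant-speed geodesic parametrization of $\gamma_i$. I would choose $W(0)$ to point along the geodesic direction $u_1$ of $\gamma_1$, so that $B_1=0$. For $B_2$ we must control the sign in $W(L)=\pm u_2$. If $\gamma_1,\gamma_2$ are geodesically reversible, then both orientations of $\gamma_2$ are geodesics, so $B_2=0$ for either sign and we are done. If instead they are embedded with the same orientation, the two curves bound a cylinder $C$ and $\dot\sigma$ is the inward conormal of $C$ at $p_1$ and the outward conormal at $p_2$; since parallel transport along $\sigma$ preserves orientation, comparing the induced boundary orientations shows that the same-orientation hypothesis is exactly the condition guaranteeing $W(L)=+u_2$, the geodesic direction of $\gamma_2$, whence again $B_2=0$. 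In all cases $B_1=B_2=0$ and the second variation is strictly negative, contradicting minimality.

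The step I expect to be the main obstacle is the boundary-term claim. In Finsler geometry the Chern connection carries a reference direction, so ``being a geodesic of $\gamma_i$'' (reference $\dot\gamma_i$) is a priori not the same as the vanishing of $B_i$, which is computed with reference $\dot\sigma$. The real work is to check, from the explicit second-variation formula together with the $1$-homogeneity of $F$, that this reference-direction discrepancy contributes nothing to the $\dot\sigma$-component, so that $B_i$ reduces to the directional geodesic curvature of $\gamma_i$ and indeed vanishes exactly in the geodesic direction. This is the single place where non-reversibility genuinely enters, consistent with the disjoint counterexample in \tref{Frankel}, whose two closed geodesics have opposite orientation and where the argument must break down.
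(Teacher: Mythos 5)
Your proposal follows the classical Frankel scheme: a minimizing geodesic $\sigma$ from $\gamma_1$ to $\gamma_2$, a Chern-parallel field $W$ along $\sigma$ tangent to the $\gamma_i$ at the endpoints, and a second-variation computation in which everything hinges on the boundary terms $B_1,B_2$ vanishing. The genuine gap is exactly the step you flag as ``the real work'': in Finsler geometry those boundary terms do \emph{not} reduce to a directional geodesic curvature of the $\gamma_i$, and they need not vanish even when the endpoint curve is a geodesic traversed in the direction of $W$. The boundary contribution has the form $g^v(D^v_X X,v)\big|_a^b$ with reference vector $v=\dot\sigma$; the hypothesis that the endpoint curve $\eta$ (running along $\gamma_i$) is a geodesic only gives $D^w_{\dot\eta}\dot\eta=0$ for the reference $w=\dot\eta=\dot\gamma_i$, and since the Chern connection genuinely depends on its reference direction, $D^{v}_{\dot\eta}\dot\eta$ can be nonzero. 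This is not a discrepancy that $1$-homogeneity removes: the paper states precisely this obstruction at the end of \sref{example} as \emph{the} reason the straightforward generalization of Frankel's proof breaks down for Finsler metrics (``the boundary term does not necessarily vanish, as it does in the Riemannian case''). Moreover, if your claimed reduction of $B_i$ were correct, essentially the same argument would prove the higher-dimensional reversible analogue of Frankel's theorem, which the paper explicitly leaves open and for which the claimed proof in~\cite{Radu} is noted to be incorrect.

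The paper's actual proof of \pref{perdeform1} sidesteps endpoint terms altogether by running a Synge-type argument on \emph{closed} curves. In the embedded, same-orientation case the two geodesics bound a cylinder $C$; one minimizes length over the class $\mathfrak{A}$ of embedded closed curves in $C$ with that orientation (Arzela--Ascoli), shows the minimizer $\alpha$ is either a closed geodesic in the interior of $C$ or coincides, with orientation, with one of the boundary geodesics, and then takes a \emph{periodic} Chern-parallel normal field along $\alpha$, pointing into $C$ when $\alpha$ is a boundary geodesic. Because the variation is through closed curves, the second variation formula has no boundary terms at all, so positive flag curvature makes it strictly negative and contradicts minimality of $\alpha$ in $\mathfrak{A}$. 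The geodesically reversible, possibly non-embedded, case is handled by constructing a cylinder whose boundary is piecewise geodesic with corners convex toward the interior and repeating the argument. If you want to repair your write-up, this change of variational problem --- from curves joining $\gamma_1$ to $\gamma_2$ to closed curves in the cylinder --- is the essential missing idea; as it stands, your argument collapses at the boundary terms, which is consistent with (and explained by) the paper's own discussion.
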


\begin{proof}
We first discuss the case when the closed geodesics $\gamma_i$
are embedded with the same orientation.
Let $\mathfrak{A}$ be the set of embedded closed curves in~$C$
that have the same orientation as $\gamma_i$.
By Arzela-Ascoli, there exists an embedded closed curve~$\alpha$
that realizes the minimum length among all curves in~$\mathfrak{A}$;
we assume that $\alpha$ is arc-length parametrized.

Let us show that either $\alpha$ coincides with one of the boundary geodesics
or is itself a geodesic that lies in the interior of $C$.
In order to do this, let us first observe
that its parts lying inside the interior of $C$
is a union of nonintersecting  geodesic segments.
Further observe that, at the points lying on the boundary of $C$,
the curve $\alpha$ has a velocity vector,
and this velocity vector is  tangent to the boundary,
i.e., proportional to the velocity vector of the corresponding $\gamma_i$
at this point.
Indeed, were $\alpha$ to have a `corner' at this point,
one could make it shorter by `shortcutting the corner'
by a geodesic within $C$, which would contradict its minimality.

Suppose $\alpha$ is tangent at a point to one of the boundary geodesics,
say, to the geodesic $\gamma_1$.
Then $\alpha$ cannot be tangent to $\gamma_1$ in such a way
that the velocity vectors are proportional at the point of tangency
with a negative  coefficient, since this would imply the existence of a self intersection of $\alpha$, which contradicts the minimality.
Thus, the velocity vectors of $\gamma_1$ and $\alpha$
are proportional at the point of tangency with a positive coefficient.
Then $\alpha$ coincides with $\gamma_1$
because it is globally and therefore locally minimal.

Thus $\alpha$ is either a geodesic in the interior of $C$
or agrees, including the orientation, with one of the boundary geodesics.
Now, we can repeat the proof of Synge's theorem:
There exists a smooth vector field along~$\alpha$,
pointing into the interior of~$C$ if~$\alpha$ is a boundary geodesic,
and parallel in the Chern connection.
From the second variation formula,
it follows that there exist shorter curves in $\mathfrak{A}$,
which is a contradiction.

The same proof works in the case where $\gamma_i$
are geodesically reversible, but not necessarily embedded.
Indeed, in that case, there exists a cylinder $C$
such that its boundary forms two closed piecewise smooth curves
with each segment a geodesic segment of one of $\gamma_i$.
Furthermore, at each vertex the two adjacent segments
are convex towards the interior of $C$.
We can then repeat the same argument as above.
The first statement of \tref{Frankel} is now proved.

We now prove the second statement of \tref{Frankel}
by constructing a Finsler metric on $S^2$
with two non intersecting closed geodesics.

Take a Katok metric $F=F_\lambda$ such that $\lambda$ is irrational.
In the spherical coordinates $(\theta,\phi )$ constructed in~\sref{prthm1},
the closed geodesics $\gamma^\pm$ lie on the equator $\{\theta=0\}$
and are, in fact, the only closed geodesics.
Though the image of these geodesics coincide,
the corresponding trajectories on the cotangent  bundles,
which we denote by $\bar\gamma^\pm\colon\R \to T^*S^2$, are disjoint.
They are trajectories of the geodesic flow
of the Hamiltonian $F^*\colon T^* S^2\to\R$,
and we may assume that they lie on the energy level $E:=(F^*)^{-1}(1)$.

Let $p_\theta$, $p_\phi$ be the canonical momenta
corresponding to the coordinates $\theta, \phi$.
Consider a  function $H:T^*M \to \R$ such that,
in a small neighborhood of $\bar\gamma^+$,
it coincides with the function $p_\theta$,
and, in a small neighborhood of $\bar\gamma^-$,
it coincides with the function $-p_\theta$ and is smooth.
Denote by $\Phi_t$ the Hamiltonian flow of the function $H$.
Near the curves $\bar\gamma^\pm$, the Hamiltonian vector field $X_H$
has coordinates
$$
\left(\frac{\partial H}{\partial p_\theta},
\frac{\partial H}{\partial p_\phi},-\frac{\partial H}{\partial \theta} , -\frac{\partial H}{\partial \phi}\right)= (\pm 1,0,0,0),
$$
which implies that, for small $t$ and near the points of $\gamma^\pm$,
the flow is given by $\Phi_t(\theta,\phi, p_\theta, p_\phi)
= (\theta \pm  t,\phi, p_\theta, p_\phi)$.
This means that the  the projections of the curves $\Phi_t(\bar\gamma^\pm)$
to the sphere are the curves $\{( \theta,\phi)\mid \theta=\pm t\}$.

Let $F^*_t=F^*\circ \Phi_{t}$ be a new Hamiltonian function
for a fixed small value of $t$.
Since $\Phi_t$ preserves the symplectic form,
the orbits of the Hamiltonian system generated by $F^*_t$
are $\Phi_t$ applied to the orbits of the Hamiltonian flow of $F^*$.
In particular, it has precisely two closed orbits,
which are the images of $\bar \gamma^\pm$ under $\Phi_t$.
Next, we define $\bar F_t\colon T^* S^2\to \R$
by the condition that it agrees with $F^*_t$ on $\Phi_{-t}(E)=(F^*_t)^{-1}(1)$
and is extended to be homogeneous of degree $1$.
For small $t$, $\bar F^*_t$ is smooth
on the cotangent bundle minus the zero section
and strictly convex, so it defines a Finsler metric on~$S^2$
with flag curvature close to $1$ and therefore positive.

The orbits of~$\bar F^*_t$ lying on~$\Phi_{-t}(E)$
are, up to a reparametrization,
the orbits of the flow generated by $ F^*_t$,
since the energy level $1$ of $\bar F^*_t$ and of $F^*_t$ coincide.
Thus all the geodesics of $\bar F_t$ other than two are not closed,
and these two are projected to the disjoint closed curves
$\{(\phi,\theta )\mid \theta= \pm t\}$.
\end{proof}

\begin{remark}
Notice that in the above example we can choose $H$
such that the Finsler metric~$\bar F_t$
has the same isometries as $F$, and hence admits a Killing vector field,
or such that its isometry group is finite.
\end{remark}

Let us shortly explain what the difficulty is
in higher dimensions of proving an analogue of Frankel's theorem.
For any Finsler metric $F$ and any $v\in T_pM$,
we have the Riemannian inner product $g^v(x,y)= D^2_\xi(F^2)_v(x,y)$ on $T_pM$,
where $D_\xi$ denotes the fiber derivative.
We furthermore have the Chern connection $D^v$
associated to $F$, that depends on a choice of $v$ as well.
If $\gamma$ is a geodesic of $F$ and $X$ is a vector field along $\gamma$,
then we have the second variation formula
$$
E''(0)=\int_a^b g^{v} (D^v_vX,D^v_vX)-g^v(R^v(X,v)v,X) dt +g^v(D^v_XX,v)|_a^b,
$$
where $v=\dot\gamma$. In the proof of Frankel's theorem,
one constructs a parallel vector field $X$
along a minimal geodesic connecting
the disjoint totally geodesic submanifolds~$N_1$ and~$N_2$
that is normal to them at the endpoints.
This defines a $1$-parameter variation
$c(t,s)=\exp_{\gamma(t)}(sX(t))$ of $\gamma$
connecting $N_1$ and $N_2$.
One would then like to use the second variation formula
to conclude that $E''(0)<0$ to obtain a contradiction.
Although the integral is negative, in the Finsler case,
the boundary term does not necessarily vanish,
as it does in the Riemannian case,
since $\eta(s)=c(a,s)$ is a geodesic,
which implies that $\frac{D^w \dot\eta}{dt}=0$ for $w=\dot\eta$,
but we need $\frac{D^v \dot\eta}{dt}=0$ for $v=\dot\gamma$ instead.

Note that recently a similar example appeared in~\cite{Rademacher}.
The metric constructed there
also has two closed, simple non-intersecting geodesics,
but may also have other closed geodesics.

\bigskip

R. L. Bryant:\
Department of Mathematics,
Duke University,
P.O. Box 90320,
Durham, NC 27708-0320,
USA.
\hspace{5pt}e-mail: bryant@math.duke.edu

P. Foulon:\
Centre International de Rencontres Math\'ematiques-CIRM,
163 avenue de Luminy,
Case 916, F-13288 Marseille - Cedex 9, France
\hspace{5pt}e-mail:  foulon@cirm-math.fr

S. Ivanov:\
St. Petersburg Department of Steklov Mathematical Institute,
Russian Academy
of Sciences, Fontanka 27, St.Petersburg 191023, Russia.
\hspace{5pt}e-mail: svivanov@pdmi.ras.ru

V. S. Matveev:\
Institut f\"ur Mathematik,
Fakult\"at f\"ur Mathematik und Informatik,\newline
Friedrich-Schiller-Universit\"at Jena,
07737 Jena, Germany.
\hspace{5pt}e-mail: vladimir.matveev@uni-jena.de

W. Ziller:\
Department of Mathematics,
University of Pennsylvania,
Philadelphia, PA 19104-6395, USA.
\hspace{5pt}e-mail: wziller@math.upenn.edu

\end{document}